\pgfplotsset{compat=1.14}
\newcommand{\R}{{ \mathbb  R  }}
\newcommand{\C}{  \mathbb  C }
\newcommand{\N}{  \mathbb N }
\renewcommand{\l}{\left\langle}
\renewcommand{\r}{\right\rangle}
\newcommand{\dsize}{\displaystyle}
\renewcommand{\cal}{\mathcal}
\numberwithin{equation}{section}
\newcommand{\G} {\mathcal G} 
\newcommand{\F}{\cal F}
\renewcommand{\H}{\mathcal{H}}
\renewcommand{\P}{\mathcal{P}}
\newtheorem{Thm}{Theorem}[section]
\newtheorem{Lemma}[Thm]{Lemma}
\newtheorem{Cor}[Thm]{Corollary}
\newtheorem{Prop}[Thm]{Proposition}
\theoremstyle{plain}
\newtheorem{theorem}{Theorem}[section]
\newtheorem{lemma}[Thm]{Lemma}
\theoremstyle{definition}
\newtheorem{definition}[theorem]{Definition}
\theoremstyle{remark}
\definecolor{cof}{RGB}{219,144,71}
\definecolor{pur}{RGB}{186,146,162}
\definecolor{greeo}{RGB}{91,173,69}
\definecolor{greet}{RGB}{52,111,72}
\title{Applications of  Lax-Milgram   theorem  to problems in frame theory }
\author{
  Laura De Carli \\
  Florida International Univ., Dept. of mathematics and statistics, Miami, FL 33199.
  \\
  \texttt{decarlil@fiu.edu} \\
   \And
  Pierluigi Vellucci \\
  Univ. Roma Tre,  Dept.  of economics,
 Via Silvio D'Amico 77, 
  00145 Rome, Italy.
  \\ 
  \texttt{pierluigi.vellucci@uniroma3.it} \\
}
\begin{document}
\maketitle

\begin{abstract}
	We apply Lax-Milgram theorem  to characterize scalable and piecewise scalable frame  in finite and infinite-dimensional Hilbert spaces. We also 
	introduce a   method for approximating   the inverse  frame operator  using finite-dimensional linear algebra which, to the best of our knowledge, is new in the literature.
\end{abstract}

\keywords{frames, scalable frames,  inverse frame operator, Lax-Milgram }


\section{Introduction}
\label{intro}

 
Let  $J\subset \N$ denote a set of indices that can  either be finite or infinite. 
A sequence of distinct vectors ${\cal F}=\{x_j\}_{j\in J} $   belonging to a separable Hilbert space $(\mathcal{H}, \ \l\ ,\ \r)$  is a \emph{ frame}  for $\H$ if there exist positive constants $A$, $B>0$ such that
\begin{equation}
\label{eq3}
A\|x\|^2\leq\sum_{j\in J} |\l x,x_j\r|^2\leq B\|x\|^2 
\end{equation}
for every $x\in \H$.  We will assume, often without saying, that the vectors $x_j$  have norm $1$.
The  \emph{frame operator} of $\F$ is $S=S_{\F}:\mathcal{H}\to \mathcal{H}$,   $ S(x)=\sum_{j\in J} \l x,x_j\r x_j.$ 
From general frame theory, we know that 
$S$ is bounded, self-adjoint  and invertible, and   
  the representation formula \begin{equation}\label{invframe}
	x=\sum_{j\in J}  \langle x,\,S^{-1}x_j\rangle x_j,  
\end{equation} holds  for every $x\in \H$.
 Furthermore, the series in \eqref{invframe}  converges unconditionally for all $x\in\mathcal{H}$.  The scalars $\langle x,\,S^{-1}x_j\rangle$ are called the \emph{frame coefficients} of $x$ relative to   the frame $\F$.


In general, the representation formula \eqref{invframe}  may difficult to apply because  inverting $S$  can be challenging, but if    $\F$ is a  {\emph Parseval  frame} (i.e., if $A=B=1$), then   $S x= S^{-1}x=x$,   and    the  reconstruction formula  $	x= \sum_{j\in J}  \langle x,\,  x_j\rangle x_j, $ holds for every $x\in\H$.   A key question in frame theory is how to  modify a given frame so that the resulting system forms a Parseval frame.  One way to do this  is just by scaling each frame vector in such a way to obtain a Parseval frame.  Frames for which such modification is possible are  called \emph{scalable}.  
 Unfortunately frame scaling is a very difficult  problem  \cite{CC,CCH,CK,CKO,CKL,DK,KO,KOP}.  A more general definition of scaling  
 is given in  \cite{CDT}.  We have recalled these definitions and  properties of scalable frames in Section 2.2.

 In this paper we use the Lax-Milgram  theorem to   characterize  frames that are scalable in the classical and generalized sense.  We also present   a method  for approximating   the  inverse frame operators which seems  new in the literature. Our  main results are in Sections 3 and 4. In Section 2 we have collected some preliminaries on  frames and the Lax Milgram theorem,  and in Section 5  we have provided some examples.
 
\section {Preliminaries}

\subsection{Basic on frames}

Most of the results presented in this section can be found e.g. in the classical  textbook   \cite{Chr}.
 For a given nonempty set $J\subset \N$   we denote with  $\ell^2(J)$  the space of sequences of real  numbers $\vec a=(a_j)_{j\in J}$ for which $\sum_{j\in J} |a_j|^2 <\infty$ and with   $\ell^\infty(J)$  the space of sequences   for which $\sup_{j\in J} |a_j| <\infty$.
 When convenient, we will  also use the notation $a=(a(j))_{j\in J}$ to  denote vectors in $\ell^p (J)$.

Let  $\F=\{x_j\}_{j\in J} $   be a frame for  a separable Hilbert space $\H$.  The 
\emph{synthesis operator   }    of $\F$ is $$T_\F:\ell^2(J)\to\H,\qquad 
  T_\F(\vec a)=\sum_{j\in J} a_jx_j.$$ 
The    \emph{ analysis   operator} of $\F$ is   
 $$T^*_{\F} :\H\to \ell^2(J),\qquad T^*_\F(x)=(\l x, x_j\r)_{j\in J}.$$    
 We will omit the subscript $\F$ when there is no ambiguity. Note  that $T  T^* =S $,    the  frame operator  of $\F$.
The operator  $S$  is self-adjoint, invertible  and positive, and satisfies.
\begin{equation}\label{eq66}
  A||x||^2\leq \l S(x), x\r=\sum_{j\in J}  |\l x,x_j \r|^2 =   || T^*(x)||^2\leq B||x||^2
 \end{equation}
where   $0<A <B$ are the frame constants of $\F$.   If $\F$ is a Parseval frame, then 
 $\l S(x), x\r=  ||x||^2$, and $S(x)=x$.
 
 When $\H $ is a finite-dimensional vector space, we can identify $\H$ with $\R^n$  and  the vectors of the frame with vectors in $\R^n$ that, with some abuse of notation, we will still denote with $x_j$.   The synthesis operator of $\F$ is represented by  a matrix $M_T$ whose   $i$-th column is $x_i$, in the sense   that $M_T x=T(x)$. The frame operator of $\F$ is represented by the matrix 
 $M_S=M_T (M_{ T })^*$.
 The element of the matrix $M_S=\{m_{i,k}\}_{i,k\leq n}$ are the dot products of the rows of $M_T$, i.e. 
  \begin{equation}\label{mij} 
  	m_{i,k}=\sum_{j\in J} x_j(k)x_j(i).
  \end{equation}
Note that the   diagonal elements of $M$ are  positive. Indeed, $m_{k,k}=\sum_{j=\in J} x_j(k)^2 \ge 0. $   If   $x_j(k)=0$ for every $j$, then the vectors $x_j$ would not span $\R^n$; thus, $x_j(k)\ne 0$ for some $j$ and $m_{k,k}>0$.
 
Frame operators of frames  in $\ell^2(J)$  can be represented by    matrices also when $|J|=\infty$.  We will not discuss the properties of  this representation; the reader can  refer  to   \cite{B}  for a thorough discussion on  representations of frame operators  with matrices in  infinite-dimensional spaces.   

 When convenient, we  will  identify  $k$-dimensional sub-spaces of $\ell^2(J)$ with $\R^k$; for example, if 
 $\Pi_k:\ell^2(J)\to \ell^2(J)$,  $\Pi_k(x)=(x(j_1)...\, x(j_k) ,\,0,... )$      is the orthogonal projection on the first $k$ components of $x$, 
 we will identify vectors $  (x(j_1) \, ... ,\, x(j_k) ,\,0,... )\in \ell^2(J) $ with  vectors $ (x(j_1) \, ... ,\, x(j_k)), \ x(j)\in\R^k$.

 	Let $\F=\{x_j\}_{j\in J}$ be a frame on a Hilbert space $\H$ with frame operator $S$.  
 	Let $P:\H\to\H$ be an orthogonal projection.   Recall that an orthogonal projection $P:\H\to\H$ satisfies  $P^*=P\circ P=P$.  	We can see at once that   
 	$$ PSP(x)= P\left(\sum_{i\in J} \l x_j, Px\r x_j\right)=\sum_{i\in J} \l Px_j, Px\r Px_j
 	$$ which is the  frame operator of $P(\F)$.
 This observation proves the following
 
 	\begin{Prop}\label{PF}
 The set $P(\F)=\{Px_j\}_{j\in J}$ is a frame   on $P(\H)$ with frame operator $PSP$. 
 \end{Prop}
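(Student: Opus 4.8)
The plan is to take the computation displayed just before the statement as the starting point. That computation already shows that, for every $x\in\H$,
\[
PSP(x)=\sum_{j\in J}\l Px, Px_j\r Px_j,
\]
so $PSP$ is the natural candidate for the frame operator. What remains is to verify that $P(\F)=\{Px_j\}_{j\in J}$ genuinely satisfies the two frame inequalities on the (closed) subspace $P(\H)$, and then to read the frame-operator identity off this same computation. First I would fix notation: $P(\H)$ is a Hilbert space in its own right, being the range of an orthogonal projection, and its inner product and norm are inherited from $\H$.

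The key observation I would isolate is that, since $P$ is an orthogonal projection, $P^*=P$ and $P$ restricts to the identity on $P(\H)$. Hence for every $y\in P(\H)$ and every $j\in J$,
\[
\l y, Px_j\r=\l Py, x_j\r=\l y, x_j\r .
\]
This single identity is the whole engine of the proof: it collapses each analysis coefficient of $P(\F)$ at a point $y\in P(\H)$ into the corresponding analysis coefficient of $\F$ at the same point.

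With this in hand, I would simply substitute into the frame inequality \eqref{eq66} for $\F$, evaluated at $y\in P(\H)$, to obtain
\[
A\|y\|^2\le\sum_{j\in J}|\l y, x_j\r|^2=\sum_{j\in J}|\l y, Px_j\r|^2\le B\|y\|^2
\]
for all $y\in P(\H)$. This shows $P(\F)$ is a frame for $P(\H)$ with the \emph{same} bounds $A$ and $B$. For the operator claim, I would then note that $Py=y$ on $P(\H)$, so the displayed identity reads $PSP(y)=\sum_{j\in J}\l y, Px_j\r Px_j$, which is precisely the frame operator of $P(\F)$ applied to $y$; thus the restriction of $PSP$ to $P(\H)$ is the frame operator of $P(\F)$.

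I do not expect a genuine analytic obstacle here: the argument is essentially the algebraic identity for a self-adjoint idempotent. The only points requiring a little care are bookkeeping rather than substance. One must record that the series for $P(\F)$ converges because it is the image, under the bounded operator $P$, of the convergent series defining $S$; that the norm of $y\in P(\H)$ agrees with its ambient norm, so that the bounds $A,B$ transfer unchanged; and that the phrase ``frame operator $PSP$'' is to be understood as the restriction of $PSP\colon\H\to\H$ to $P(\H)$, on which $PSP$ and $PS$ coincide. (One may also remark that the projected vectors $Px_j$ need neither have norm $1$ nor remain distinct, but since the frame definition \eqref{eq3} asks only for the quantitative bounds, this does not affect the conclusion.)
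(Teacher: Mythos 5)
Your proposal is correct and takes essentially the same route as the paper: both arguments rest entirely on the self-adjoint idempotent identity $P^*=P=P^2$, which the paper uses to compute $PSP(x)=\sum_{j\in J}\l Px_j,\,Px\r Px_j$ and identify this as the frame operator of $P(\F)$. The only difference is that you explicitly verify the frame inequalities on $P(\H)$ (showing the bounds $A,B$ transfer unchanged via $\l y,Px_j\r=\l y,x_j\r$ for $y\in P(\H)$), a step the paper leaves implicit; this is added care, not a different method.
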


Note that the operator $PSP$  is defined in $\H$, but it is only invertible in $P(\H)$. In the following we will assume, often  without saying, that $PSP:P(\H)\to P(\H)$.  

 \noindent
 {\it Remark.}	If $\H=\ell^2(J)$  and $P=\Pi_k$,  we can identify $\Pi_k(\ell^2(J))$ with $\R^k$ and the matrix that represents $\Pi_k S\Pi_k$  on $\Pi_k(\ell^2(J))$  with a $k\times k$ matrix.   In view of this observation and  \eqref{mij}, we can see at once that if $S$  is represented by the matrix $M_S$, 
 	the frame operator of  $\Pi_k(\F)$ is represented  on $\Pi_k(\ell^2(J))$ by the matrix obtained from  the intersection of the first $k$ rows and columns of $M_S$.
 	Note that by the Cauchy interlacing theorem  (see e.g. \cite{H}, Thm. 4.3.17)  principal sub-matrices of  positive-definite  symmetric matrices are always  positive-definite.   

  
 	
 	%

 

\subsection{Scalable and piecewise scalable frames}
Let us recall the definition of scalable frames from \cite{KOP}.

\begin{definition}\label{def-scalable}
	A frame  $\F=\{x_i\}_{i\in J} $  for a Hilbert space $\H$  is
	{\it scalable} if there exists     $\vec c \in \ell^\infty(J)$ for which  $\F_{\vec c}=:\{c_i x_i\}_{i\in J}$ is a Parseval frame.
	 
\end{definition}
 We denote with  $T_{\vec c}$, $T^*_{\vec c}$ and $S_{\vec c}$  the  analysis,  the synthesis  and  the frame operator  of $ \F_{\vec c}$.  Thus,
 \begin{equation}\label{ss}
 	S_{\vec c}\,x= \sum_{j\in J} |c_j|^2 \l x, x_j\r x_j
 	\end{equation}

Some of the $c_j$  are allowed to be zero as long as  $\F_{\vec c}$ is still a frame; we say that  $\F$ is {\it strictly scalable} is if can be scaled to a Parseval frame with nonzero coefficients $c_j$.

 We let   $D_{\vec c}:\ell^2(J)\to \ell^2(J)$ be the   operator  defined as  \begin{equation}\label{dil} D_{\vec c}(\vec y) = (y_j c_j)_{j\in J}.\end{equation}
 We also let 
	$T^*_{\vec c} :\H\to\ell^2(J)$,    
	$$T^*_{\vec c}(x)= D_{\vec c}\circ T^*= (c_j\l x,\,  x_j\r)_{j\in J}= ( \l x,\,  c_jx_j\r)_{j\in J}  $$  be the analysis operator of $\F_{\vec c}$,
	 and   
	 \begin{equation}\label{Sc}
	 	S_{\vec c}= (T^*_{\vec c})^*\circ T^*_{\vec c} = T\circ D_{\vec c}^2\circ T^*\end{equation}
 	be the frame operator of $\F_{\vec c}$. Thus, 
	   $\F$ is scalable if and only if  $ T\circ D_{\vec c}^2\circ T^* =I$, the identity operator on $\H$.
	

The following generalization of  Definition \ref{def-scalable} is in \cite{CDT}.  
\begin{definition}\label{def-scalable2} 
	A frame $\F=\{x_i\}_{i\in J} $ for a Hilbert space $\H$ is {\emph  piecewise
		scalable }  if  there exist an orthogonal  projection $P:\H\to\H$ and   constants $\{a_i, b_i\}_{i\in J} $
	so that $\F_{ \vec a, \vec b}=:\{a_iPx_i+b_i(I -P)x_i\}_{i\in J}$ is a Parseval frame for $\H$.  
 \end{definition}
  In \cite{CDT} only   finite frames in $\R^n$ are considered, but    the definitions and  many results proved  in that paper   easily  generalize  to frames in infinite dimensional spaces. 
   
%
With the notation previously introduced, the analysis operator   of  $\F_{ \vec a, \vec b}$ is  
\begin{align*}
 T_{ \vec a, \vec b}^*x &=: (\l x,\, a_jPx_j + b_j(I-P)x_j\r)_{j\in J} 
 \\ &= (\l Px,\, a_j x_j \r  )_{j\in J}+
 (\l (I-P)x,\, b_j x_j \r  )_{j\in J}
\\ &=D_{\vec a}T^*P x+ D_{\vec b}T^*(I-P) x.\end{align*} 
The synthesis operator is $T_{  \vec a, \vec b} (\vec c)=P T D_{\vec a}\,(\vec c)+ (I-P)T D_{\vec b}\, (\vec c)$,
and the frame operator is 
\begin{align}\nonumber
S_{ \vec a, \vec b} &=T_{  \vec a, \vec b} T^*_{ \vec a, \vec b}=  (P T D_{\vec a} + (I-P)T D_{\vec b} )(D_{\vec a} T ^*P + D_{\vec b} T ^*(I-P))\\
\nonumber
&=   PT D_{\vec a}^2 T ^* P  +(I-P) T D_{\vec b}^2 T ^* (I-P)+   \{R+R^*  \}
\end{align}
where $R= (I-P) T D_{\vec b}  D_{\vec a}T ^*P$.
In view of \eqref{Sc}, we  can write 
\begin{equation}\label{f2} S_{  \vec a, \vec b}=  PS_{\vec a} P   + (I-P)S_{\vec b} (I-P) +  \{R+R^*  \}.
\end{equation}

We prove the following 

\begin{Lemma}\label{pf} With the notation and definitions previously introduced, a  frame $\F$ is piecewise scalable with projection $P$  and scaling constants $\vec a, \vec b$ if and only if the following holds for every  $u, w\in \H$
	
	a) $ 	\l   D_{\vec a}T ^*Pu, \  D_{\vec b}T ^*(I-P)w\r =0,$  

   b)  $|| D_{\vec a}T ^*Pu||^2+ || D_{\vec b}T ^*(I-P)u||^2=||u||^2$.
\end{Lemma}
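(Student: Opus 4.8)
The plan is to invoke the basic fact recorded earlier in the excerpt that a frame is Parseval exactly when its frame operator is the identity, equivalently when its analysis operator is an isometry (by \eqref{eq66} with $A=B=1$). Applied to $\F_{\vec a,\vec b}$, this says that $\F$ is piecewise scalable with projection $P$ and constants $\vec a,\vec b$ if and only if $\|T^*_{\vec a,\vec b}u\|^2=\|u\|^2$ for every $u\in\H$. Inserting the formula $T^*_{\vec a,\vec b}u=D_{\vec a}T^*Pu+D_{\vec b}T^*(I-P)u$ obtained above and expanding the square in the real inner product reduces everything to the single identity
\[
\|D_{\vec a}T^*Pu\|^2+\|D_{\vec b}T^*(I-P)u\|^2+2\langle D_{\vec a}T^*Pu,\ D_{\vec b}T^*(I-P)u\rangle=\|u\|^2 \qquad(\ast)
\]
holding for all $u\in\H$. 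The ``if'' direction is then immediate: condition a) with $w=u$ annihilates the cross term, and b) identifies the remaining two terms with $\|u\|^2$, so $(\ast)$ holds.

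For the ``only if'' direction I would first isolate the two ``diagonal'' pieces by testing $(\ast)$ on the orthogonal summands. Substituting $Pu$ for $u$ and using $P^2=P$ together with $(I-P)P=0$ kills both the second term and the cross term and leaves $\|D_{\vec a}T^*Pu\|^2=\|Pu\|^2$; symmetrically, substituting $(I-P)u$ for $u$ gives $\|D_{\vec b}T^*(I-P)u\|^2=\|(I-P)u\|^2$. Adding these and using the Pythagorean identity $\|Pu\|^2+\|(I-P)u\|^2=\|u\|^2$ (valid since $Pu\perp(I-P)u$) yields exactly b). Feeding b) back into $(\ast)$ then forces the cross term to vanish, that is $\langle D_{\vec a}T^*Pu,\ D_{\vec b}T^*(I-P)u\rangle=0$ for every $u$.

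The one genuinely nontrivial step is upgrading this ``diagonal'' vanishing to the full bilinear statement a), which is asserted for independent $u,w$. The form $\langle D_{\vec a}T^*Pu,\ D_{\vec b}T^*(I-P)w\rangle$ depends on its two arguments only through $Pu$ and $(I-P)w$, so I would recover the off-diagonal values by a single well-chosen substitution: replacing $u$ by $Pv+(I-P)w$ in the diagonal relation and simplifying via $P(Pv+(I-P)w)=Pv$ and $(I-P)(Pv+(I-P)w)=(I-P)w$ returns precisely $\langle D_{\vec a}T^*Pv,\ D_{\vec b}T^*(I-P)w\rangle=0$ for all $v,w$, which is a). This is the usual polarization manoeuvre adapted to the fact that the bilinear form is supported on the complementary subspaces $P\H$ and $(I-P)\H$, and it is the place where I expect the only real care to be needed.

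As a cross-check, and perhaps as a cleaner alternative, the same two conditions can be read directly off the operator decomposition \eqref{f2}: relative to $\H=P\H\oplus(I-P)\H$ the summands $PS_{\vec a}P$ and $(I-P)S_{\vec b}(I-P)$ are the diagonal blocks while $R$ and $R^*$ are the off-diagonal ones, so the equation $S_{\vec a,\vec b}=I$ forces, by matching blocks, both $R=0$ (which is equivalent to a), since $\langle Ru,w\rangle=\langle D_{\vec a}T^*Pu,\ D_{\vec b}T^*(I-P)w\rangle$) and $PS_{\vec a}P+(I-P)S_{\vec b}(I-P)=I$ (which is equivalent to b), after passing to quadratic forms of self-adjoint operators).
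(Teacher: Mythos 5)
Your proposal is correct, and it takes a genuinely different route from the paper. The paper's proof is not self-contained: it invokes Theorem 2.8 of \cite{CDT}, which asserts that $\F$ is piecewise scalable if and only if $R=(I-P)TD_{\vec b}D_{\vec a}T^*P=0$ and $PS_{\vec a}P+(I-P)S_{\vec b}(I-P)=I$, and then verifies that a) is equivalent to $R=0$ (by the same adjoint identity $\l Ru,w\r=\l D_{\vec a}T^*Pu,\,D_{\vec b}T^*(I-P)w\r$ that you use in your cross-check) and that b) is equivalent to the operator identity, where the converse direction rests on the assertion that $\l S_{\vec a,\vec b}u,u\r=\|u\|^2$ for all $u$ forces $S_{\vec a,\vec b}=I$, i.e.\ on the fact that a bounded self-adjoint operator is determined by its quadratic form. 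Your argument instead starts from the bare definition that $\F_{\vec a,\vec b}$ is Parseval, phrased as the isometry property of its analysis operator via \eqref{eq66}, and extracts both conditions by elementary substitutions: $u\mapsto Pu$ and $u\mapsto(I-P)u$ to isolate the diagonal terms, and the key substitution $u\mapsto Pv+(I-P)w$ to upgrade the diagonal vanishing of the cross term to the full bilinear statement a) --- a step which is valid precisely because, as you note, the form factors through $Pu$ and $(I-P)w$, so no genuine polarization is needed. This buys you two things the paper does not have: the proof is self-contained (the paper must appeal to \cite{CDT}, whose proof is given only for $\R^n$, with the infinite-dimensional generalization merely asserted to be straightforward), and it stays entirely at the level of scalar identities, never needing the quadratic-form-determines-operator fact. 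Your closing block-decomposition cross-check via \eqref{f2} is essentially a proof of the cited Theorem 2.8 itself, so it recovers the paper's route as a special case; the paper's approach buys only brevity, conditional on accepting the external result.
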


\begin{proof}
	By Theorem 2.8 in  \cite {CDT}, $\F$  is piecewise scalable if and only if 
	i)  and ii) below hold.
	$$ i) \quad R=(I-P) T D_{\vec b}  D_{\vec a}T ^*P=0, \qquad  ii) \quad PS_{\vec a} P  + (I-P)S_{\vec b}  (I-P) = I.
	 .
	$$
The proof of Theorem 2.8 is  for frames in $\R^n$, but its   generalization  to infinite-dimensional vector spaces is straightforward.

Let us show that a) is equivalent to i) and b) to ii).  Indeed,  for every $u, w\in \H$, 
$$ 
\l Ru,\, w\r= \l (I-P) T D_{\vec b}  D_{\vec a}T ^*Pu, \, w\r= \l   D_{\vec a}T ^*Pu, \  D_{\vec b}T ^*(I-P)w\r  $$    thus $R=0$ if and only if    a)   holds.

Let $S_{\vec a, \, \vec b}=PS_{\vec a} P  + (I-P)S_{\vec b}  (I-P)$. 
Since $P$ and $I-P$ have orthogonal range,   
$$\l  S_{\vec a, \, \vec b}u\, u\r= \l PS_{\vec a} Pu, u\r+ \l    (I-P)S_{\vec b} (I-P) u, u\r= || D_{\vec a}T ^*Pu||^2+ || D_{\vec b}T ^*(I-P)u||^2.  
$$  If  ii) holds, then $  S_{\vec a, \, \vec b} = I$   and  b) trivially holds. Conversely, if b) holds, then 
  $\l  S_{\vec a, \, \vec b} u  ,\, u\r=||u||^2 $ and  this is only possible if $S_{\vec a, \, \vec b}  =I$.  

	\end{proof}

\subsection{The Stampacchia and the  Lax-Milgram theorems}

We recall the following definitions.
A bilinear form  $a=a(\cdot,\cdot):\H\times \H \to\R$  is \emph{bounded  }    if  there exist a constant $ C>0$ for which 
$   a(u,v)\leq C\|u\| \|v\|$ whenever $u,v\in \mathcal{H}$. 
We say that  $a$   is {\it coercive} if  there exist a constant $c>0$ for which 
$ a(u,u)\ge c||u||^2 $ for every $u\in \H$, and   it is     symmetric  if $a(u,v )= {a(v,u)}$.

A linear  function  $L:\H\to\R$  is \emph{ bounded} (or continuous)   if there exists a constant $C>0$ such that $|L(v)|\leq C||v||$ for  every $v\in \H$. We can also say that $L\in \H'$, the dual space of $\H$. By the Riesz representation theorem,  for every $L\in \H'$ there exists a unique  $f\in H$ such that $L(u)=\langle f, u\rangle$ for every $v\in\mathcal{H}$.

 Given a  bilinear form $a(\cdot,\, \cdot)$ on $\H$  and  a  function  $L =\l f, \cdot\r \in \H'$, we define the  functional $J:
\H\to \R$,  
\begin{equation}\label{def-J} J(u)=\frac{1}{2}a(u,u)-  Lu
\end{equation}

   The Lax-Milgram theorem can be stated as follows:
 \begin{theorem}[Lax-Milgram]
 	\label{lm:lax}
 	Let  $a(\cdot,\, \cdot) $  be a  bilinear, bounded and coercive form on $\H$ and let $L \in \H'$. Let $ J  $ be  as  in \eqref{def-J}.  
 There exists a unique $v \in \H$   for which  $a(u,\,v)=L(u)$ for every $u\in \H$.  
 
 If $a$ is symmetric,    $v$ is characterized by the property
 \begin{equation}\label{char}
 	J(v)=\min_{u\in \H}\, J(u).
 \end{equation}
 	 where $ J  $ is   as  in \eqref{def-J}.
 \end{theorem}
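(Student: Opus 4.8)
The plan is to convert the variational identity into a single operator equation in $\H$ and then prove invertibility of that operator. First, for each fixed $v\in\H$ the map $u\mapsto a(u,v)$ is, by boundedness, a bounded linear functional on $\H$ (using $|a(u,v)|\le C\|u\|\,\|v\|$); the Riesz representation theorem produces a unique vector, call it $Av$, with $a(u,v)=\l u, Av\r$ for all $u$. Bilinearity of $a$ makes $A:\H\to\H$ linear, and $\|Av\|=\sup_{\|u\|=1}|a(u,v)|\le C\|v\|$ shows $A$ is bounded. Writing $L(u)=\l u, f\r$ by Riesz a second time, the requirement $a(u,v)=L(u)$ for every $u$ becomes $\l u, Av\r=\l u, f\r$ for every $u$, i.e. the single equation $Av=f$. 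Thus existence and uniqueness of $v$ are equivalent to $A$ being a bijection of $\H$, and then $v=A^{-1}f$.

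I expect the main obstacle to be surjectivity of $A$, and this is exactly where coercivity does the work. Coercivity gives $\l v, Av\r=a(v,v)\ge c\|v\|^2$, so by Cauchy--Schwarz $\|Av\|\,\|v\|\ge c\|v\|^2$, hence the lower bound $\|Av\|\ge c\|v\|$. This at once yields injectivity and, via a Cauchy-sequence argument on preimages, that $\mathrm{Range}(A)$ is closed. To pass to surjectivity I would show the range is dense: if $w\perp\mathrm{Range}(A)$, then in particular $0=\l w, Aw\r=a(w,w)\ge c\|w\|^2$, forcing $w=0$. A closed dense subspace is all of $\H$, so $A$ is onto, $A^{-1}$ exists and is bounded (by $c^{-1}$, again from the lower bound), and $v=A^{-1}f$ is the unique solution. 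An alternative route that sidesteps the orthogonal-complement step is to exhibit the solution as the unique fixed point of the contraction $v\mapsto v-\rho(Av-f)$ for $0<\rho<2c/C^2$, since $\|w-\rho Aw\|^2\le(1-2\rho c+\rho^2C^2)\|w\|^2$ with contraction factor strictly below $1$ on that interval.

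For the variational characterization under symmetry I would argue directly. Assuming $v$ solves $a(u,v)=L(u)$ for all $u$, expanding $J(v+w)$ and using symmetry to collect the cross term gives, for arbitrary $w\in\H$,
\[
J(v+w)=J(v)+\big(a(w,v)-L(w)\big)+\tfrac12 a(w,w).
\]
The middle term vanishes by the equation tested against $w$, and coercivity gives $\tfrac12 a(w,w)\ge\tfrac{c}{2}\|w\|^2\ge 0$, so $J(v+w)\ge J(v)$ with equality only for $w=0$; hence $v$ is the unique minimizer. For the converse I would differentiate the scalar quadratic $t\mapsto J(v+tw)$ at $t=0$: vanishing of the derivative yields $a(v,w)-L(w)=0$ for every $w$, which by symmetry is precisely the variational equation. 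The analytic content is concentrated entirely in the surjectivity step; the reduction to $Av=f$ and the symmetric characterization are then routine Hilbert-space computations.
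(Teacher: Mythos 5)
Your proof is correct and complete. The paper does not actually prove this theorem---it defers to Brezis, Section 5.3, and only remarks that the proof amounts to producing a bounded bijective operator $A$ with $a(u,v)=\langle Au,\,v\rangle$; your argument (Riesz representation, the coercivity bound $\|Av\|\ge c\|v\|$ giving injectivity and closed range, the orthogonality argument giving dense range, and the quadratic expansion $J(v+w)=J(v)+\bigl(a(w,v)-L(w)\bigr)+\tfrac12 a(w,w)$ for the symmetric characterization) is exactly the standard way of filling in that sketch, while your contraction-mapping alternative is essentially the route Brezis himself takes.
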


The following theorem due to G. Stampacchia  is an important generalization of the Lax-Milgram theorem
 
 \begin{theorem}[Stampacchia]
 	\label{Stampacchia}
 	Let $K$ be a nonempty closed convex subset of   $\H$.  Let  $a(\cdot,\, \cdot)  $   be  as in Theorem \ref{lm:lax}. Then, for  every $f\in\H$,  exists a unique $v\in K$   such that
 	 \begin{equation}\label{st} a(v,u-v)\ge \l f,\,  u-v\r \quad \mbox{ for every   $u\in K$. }
 		\end{equation} 
 	If $a$ is conjugate-symmetric,   $v$ is characterized by the property
 	$$
 	J(v)=\min_{u\in K} J(u)
 	$$ 
 	where $ J  $ is   as  in \eqref{def-J}.

 \end{theorem}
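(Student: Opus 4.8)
The plan is to reduce the variational inequality \eqref{st} to a fixed-point equation for the metric projection onto $K$, and then to invoke the Banach contraction principle. First I would use the Riesz representation theorem to encode the bilinear form as a bounded linear operator: for each fixed $u\in\H$ the map $v\mapsto a(u,v)$ is a bounded linear functional, so there is a unique $Au\in\H$ with $a(u,v)=\l Au,v\r$ for all $v\in\H$. Boundedness of $a$ gives $\|Au\|\le C\|u\|$, and coercivity gives $\l Au,u\r=a(u,u)\ge c\|u\|^2$. With this notation \eqref{st} reads $\l Av-f,\,u-v\r\ge 0$ for all $u\in K$, equivalently $\l f-Av,\,u-v\r\le 0$ for all $u\in K$.

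Next I would recall the variational characterization of the metric projection $P_K$ onto the nonempty closed convex set $K$: for $w\in\H$ one has $P_Kw=v$ precisely when $v\in K$ and $\l w-v,\,u-v\r\le 0$ for every $u\in K$ (here completeness of $\H$ and convexity of $K$ are used, and $P_K$ is nonexpansive). Fixing any $\rho>0$ and multiplying by $\rho$, the condition $\l f-Av,\,u-v\r\le 0$ becomes $\l (v+\rho(f-Av))-v,\,u-v\r\le 0$, which by the characterization above is exactly $v=P_K\bigl(v+\rho(f-Av)\bigr)$. Thus solving \eqref{st} is equivalent to finding a fixed point of the map $T(v)=P_K\bigl(v+\rho(f-Av)\bigr)$.

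The heart of the argument, and the step I expect to be the main obstacle, is to choose $\rho$ so that $T$ is a contraction. Since $P_K$ is nonexpansive, for $w=v_1-v_2$ I would estimate $\|T(v_1)-T(v_2)\|^2\le\|w-\rho Aw\|^2=\|w\|^2-2\rho\l Aw,w\r+\rho^2\|Aw\|^2\le(1-2\rho c+\rho^2C^2)\|w\|^2$, using coercivity for the middle term and boundedness for the last. The quadratic $1-2\rho c+\rho^2C^2$ is strictly less than $1$ for $0<\rho<2c/C^2$, so fixing such a $\rho$ makes $T$ a strict contraction. The Banach fixed-point theorem then produces a unique fixed point $v\in K$, which gives both existence and uniqueness in \eqref{st}.

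Finally, when $a$ is symmetric, I would establish the equivalence with the minimization of $J$ by completing the square. For $u,v\in K$, symmetry gives $\tfrac12 a(u,u)=\tfrac12 a(v,v)+a(v,u-v)+\tfrac12 a(u-v,u-v)$, whence $J(u)-J(v)=\bigl(a(v,u-v)-\l f,u-v\r\bigr)+\tfrac12 a(u-v,u-v)$. If $v$ solves \eqref{st}, the first bracket is nonnegative and coercivity makes the second term nonnegative, so $J(u)\ge J(v)$ for all $u\in K$. Conversely, if $v$ minimizes $J$ over $K$, then for $u\in K$ and $t\in(0,1]$ convexity gives $v+t(u-v)\in K$, and computing the right derivative of $t\mapsto J(v+t(u-v))$ at $t=0$ yields $a(v,u-v)-\l f,u-v\r\ge 0$, which is \eqref{st}. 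This completes the characterization.
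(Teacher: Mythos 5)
Your proof is correct and complete: the reduction of \eqref{st} to the fixed-point equation $v=P_K\bigl(v+\rho(f-Av)\bigr)$, the contraction estimate for $0<\rho<2c/C^2$, and the completing-the-square argument for the symmetric case are all sound. The paper itself gives no proof of this theorem---it defers to Brezis, Section 5.3---and your argument is precisely the classical projection/Banach fixed-point proof found there, so you have in effect reproduced the proof the paper points to.
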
 
For the proof of  Theorems \ref{lm:lax} and \ref{Stampacchia} see e.g. \cite{Brezis},  section 5.3.

Theorem \ref{lm:lax}  can be  proven by showing that there exists  a linear,  bounded, bijective   application $A:\H\to\H$ for which 
$ a(u,v)= \langle Au,\, v\rangle$    whenever $u, v\in \H$.
If $\H$ is finite-dimensional,  the functional $A$ is represented by a matrix   that can still be denoted with $A$.
Thus,  for a given $f\in\H$,  the unique  $ v$ in Theorem \ref{lm:lax}  satisfies  $\langle Av, u\rangle=\l f, u\r$ for every $u\in \H$,  and  hence   $ v=A^{-1}f$.   
If $a$ is  symmetric, then $A$ is self-adjoint, and by the second part of Lax-Milgram theorem, 
 the minimum of the functional $J(u)=\frac 12\l  Au, u\r-\l f,u\r$ is attained when $u=v=A^{-1}f$.

 If $K$ is a subspace of $\H$, then   for  a given  $f\in\H$,  the  unique $v\in K$  that satisfies \eqref{st}   satisfies also  
 $ a(v,u)= \l Av, u\r= \l f,u\r$    for every $u\in K$. The properties of such $v$ are described by the following
 

 \begin{Cor} 
	\label{lm:lax2}

	Let  $A:\H\to\H$  be  linear,  bounded and  invertible. 	Let $K$ be a nonempty closed   subspace of     $\H$.  
	
 a)  For a given $f\in\H$  exists a unique $v\in K$ for which $ \langle Av, u \rangle=\l f, u\r$ for every $ u\in K$.
If $A$ is  self-adjoint,      the minimum of the operator  
 \begin{equation}  \label{equivJ} J(u)=\frac 12\l  Au, u\r-\l f,u\r 
	  \end{equation}
is attained  when $u=v$.

b) Let $P=P_K  :\H\to \H$ be the orthogonal projection on   $K$.   	The operator  $PAP$ is invertible on $K$,
 and  for every $f\in \H$,  the element  $ v= ( PAP)^{-1} Pf $ is characterized by the property  \eqref{equivJ}.
	  
	\end{Cor}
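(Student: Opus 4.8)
The plan is to deduce both statements from the variational machinery already set up, treating the closed subspace $K$ as a Hilbert space in its own right. Throughout I will use that the bilinear form $a(u,v)=\l Au,v\r$ is bounded, since $|a(u,v)|\le\|A\|\,\|u\|\,\|v\|$, and coercive, i.e. $a(u,u)=\l Au,u\r\ge c\|u\|^2$ for some $c>0$. This coercivity is the property that the argument genuinely uses (mere invertibility of $A$ would not suffice, as the example $\H=\R^2$, $A=\left(\begin{smallmatrix}0&1\\1&0\end{smallmatrix}\right)$, $K=\mathrm{span}(e_1)$ shows, where $PAP$ collapses to $0$ on $K$); for the intended application $A=S$ this coercivity is automatic by \eqref{eq66}.

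For part a) I would first apply Stampacchia's theorem (Theorem \ref{Stampacchia}) to the closed convex set $K$, obtaining a unique $v\in K$ with $a(v,u-v)\ge\l f,u-v\r$ for every $u\in K$, as in \eqref{st}. The key step is to upgrade this inequality to the equality $\l Av,u\r=\l f,u\r$: since $K$ is a subspace, for any $w\in K$ both $v+w$ and $v-w$ lie in $K$, and inserting $u=v\pm w$ into the inequality yields $a(v,w)\ge\l f,w\r$ and $a(v,w)\le\l f,w\r$ simultaneously, hence $\l Av,w\r=\l f,w\r$ for all $w\in K$. Uniqueness is then immediate from coercivity: if $v_1,v_2$ both work, then $\l A(v_1-v_2),w\r=0$ for all $w\in K$, and the choice $w=v_1-v_2$ gives $c\|v_1-v_2\|^2\le 0$. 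When $a$ is symmetric (equivalently $A$ self-adjoint), the characterization $J(v)=\min_{u\in K}J(u)$ for $J$ as in \eqref{equivJ} is exactly the second assertion of Stampacchia's theorem applied on $K$.

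For part b) I would first rewrite the weak identity from a). Using $P^*=P$ together with $u=Pu$ for $u\in K$ and $v=Pv$, the relation $\l Av,u\r=\l f,u\r$ for all $u\in K$ becomes $\l PAPv,u\r=\l Pf,u\r$ for all $u\in K$; since $PAPv-Pf\in K$, testing against $u=PAPv-Pf$ forces $PAPv=Pf$. It then remains to show that $PAP$ is invertible as an operator $K\to K$. This is where coercivity enters again: for $u\in K$ one has $\l PAPu,u\r=\l Au,u\r\ge c\|u\|^2$, so the restriction of $a$ to $K\times K$ is bounded and coercive, and for $u,w\in K$ one checks $\l PAPu,w\r=\l Au,w\r$, so $PAP|_K$ is precisely the operator representing $a|_{K\times K}$. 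Lax-Milgram (Theorem \ref{lm:lax}) applied on $K$ yields its invertibility, and combined with $PAPv=Pf$ this gives $v=(PAP)^{-1}Pf$; the minimization characterization \eqref{equivJ} carries over from part a) in the self-adjoint case.

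The main obstacle I anticipate is exactly the invertibility of $PAP$ on $K$. It must be stressed that this is invertibility of $PAP$ viewed as a map $K\to K$ (not on all of $\H$, where $PAP$ is never invertible once $K\neq\H$), and that it rests on coercivity of the form $a(u,v)=\l Au,v\r$ rather than on invertibility of $A$ alone. Once the weak formulation is read as the equation $PAPv=Pf$ on $K$, the rest is a direct application of the two theorems of Section 2.3.
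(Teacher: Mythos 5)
Your proof is correct, and it runs on the same machinery as the paper's: part a) from Stampacchia/Lax--Milgram, part b) by rewriting the weak identity from a) as $PAPv=Pf$ on $K$. There are two differences worth noting. First, for the invertibility of $PAP$ on $K$, the paper does not re-invoke Lax--Milgram on $K$ as you do; it observes that part a) already yields, for every $f\in\H$, a unique $v\in K$ with $PAPv=Pf$, and since $Pf$ ranges over all of $K$, this unique solvability \emph{is} the bijectivity of $PAP:K\to K$. Your route --- checking that the form $a(u,w)=\langle Au,w\rangle$ restricted to $K\times K$ is bounded and coercive on the Hilbert space $K$ and is represented there by $PAP|_K$, then applying Theorem \ref{lm:lax} on $K$ --- is equally valid and more self-contained, at the cost of a slight redundancy. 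Second, and more importantly, your insistence on coercivity is not pedantry: the corollary as literally stated (with $A$ merely linear, bounded, invertible) is false, and your counterexample $A=\left(\begin{smallmatrix}0&1\\1&0\end{smallmatrix}\right)$, $K=\mathrm{span}(e_1)$ kills both parts (for generic $f$ there is no $v$ in a), and $PAP=0$ on $K$ in b), even though this $A$ is self-adjoint and invertible). The paper implicitly relies on the discussion immediately preceding the corollary, where $A$ is the operator representing a bounded \emph{coercive} bilinear form --- the only setting in which the corollary is later used, since frame operators are coercive by \eqref{eq66}. So your reading of the hypotheses is the correct one, and your write-up also supplies the details (the $u=v\pm w$ trick to pass from Stampacchia's inequality to an equality on a subspace, uniqueness via coercivity) that the paper compresses into ``follows directly from Theorems \ref{lm:lax} and \ref{Stampacchia}.''
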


\begin{proof}
  a) follows directly from Theorems Theorems \ref{lm:lax}   and \ref{Stampacchia}.
	
	Let us  prove b).
  For a given $f\in\H$, the element $v $ in \eqref{equivJ} is such that  $v=Pv$. By   a),  for every $u\in \H$ we have that  
	$\l A  Pv, \, Pu\r=\l PAP v, u\r= \l Pf,   u\r$. Thus,  for a given $f \in P(\H)$,   we can always find a unique $v\in P(\H )$ for which   $  PAPv= Pf  $,   which proves that  the operator  $PAP$ is invertible on $P(\H )$.
%
\end{proof}

We state and prove here  an easy lemma  that will be useful in  the following sections

\begin{Lemma}\label{el} Let $F:\H\to\R$; suppose that there exist   $x_0\in\H$ for  which    $F(x_0)=\min_{x\in\H} F(x)$. Then 
	$$F(x_0)=\inf_{r>0}\min_{||y||=r} F(y).$$
\end{Lemma}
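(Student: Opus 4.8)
The plan is to establish the identity as a two-sided inequality. Write $g(r) := \min_{\|y\|=r} F(y)$ for each $r>0$; the very use of $\min$ in the statement presupposes that this value is attained, and I will take that for granted (in the intended applications $F$ is the continuous functional $J$, and the minimization often takes place over finite-dimensional subspaces, where the relevant spheres are compact). The goal is then to prove $\inf_{r>0} g(r) = F(x_0)$, which I split into the two bounds $\inf_{r>0} g(r) \ge F(x_0)$ and $\inf_{r>0} g(r) \le F(x_0)$.

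The lower bound is immediate from global minimality. Since $F(x_0) = \min_{x\in\H} F(x)$, we have $F(y) \ge F(x_0)$ for every $y \in \H$, and in particular for every $y$ lying on a fixed sphere $\|y\| = r$. Taking the minimum over that sphere gives $g(r) \ge F(x_0)$ for all $r>0$, and passing to the infimum over $r$ yields $\inf_{r>0} g(r) \ge F(x_0)$.

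For the upper bound I would set $r_0 := \|x_0\|$. When $x_0 \ne 0$ this radius is strictly positive and $x_0$ itself lies on the sphere $\|y\| = r_0$; hence $g(r_0) \le F(x_0)$ (indeed $g(r_0) = F(x_0)$, since $x_0$ is even the global minimizer), and therefore $\inf_{r>0} g(r) \le g(r_0) \le F(x_0)$. Combined with the previous step, this closes the argument whenever the minimizer is nonzero.

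The one delicate point, and the step I expect to be the real obstacle, is the degenerate case $x_0 = 0$: then $r_0 = 0$ is excluded from the infimum $\inf_{r>0}$ and the direct substitution above is unavailable. Here I would invoke continuity of $F$: choosing a minimizer $y_r$ on each sphere of radius $r$, the condition $\|y_r\| = r \to 0$ forces $y_r \to 0$ in norm, so $g(r) = F(y_r) \to F(0) = F(x_0)$ and thus $\inf_{r>0} g(r) \le F(x_0)$. This is the only place where continuity of $F$ (or, alternatively, the standing assumption that the minimizer is nonzero, which holds in the intended applications since the minimizer of $J$ is $A^{-1}f \ne 0$ when $f \ne 0$) is genuinely needed; without some such hypothesis the conclusion can fail, as the function equal to $0$ at the origin and to $1$ elsewhere already shows.
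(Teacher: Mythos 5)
Your proof is correct, and its two main bounds are obtained exactly as in the paper: the lower bound $\inf_{r>0}\min_{\|y\|=r}F(y)\ge F(x_0)$ follows from global minimality, and the upper bound from the fact that $x_0$ itself lies on a sphere. The paper phrases the upper bound as the pointwise inequality $F(x)\ge \min_{\|y\|=\|x\|}F(y)$ followed by taking infima, which is your $r_0=\|x_0\|$ substitution in disguise. What you add --- and what the paper's own proof silently skips --- is the degenerate case $x_0=0$: the paper's passage from ``$F(x)\ge\min_{\|y\|=\|x\|}F(y)$ for every $x$'' to ``$\min_{x}F(x)\ge \inf_{\|x\|>0}\min_{\|y\|=\|x\|}F(y)$'' is only legitimate when the minimum of $F$ is attained at some $x\ne 0$, since the infimum on the right excludes the radius $r=0$. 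Your example ($F(0)=0$, $F\equiv 1$ elsewhere) shows that, as a statement about an arbitrary $F:\H\to\R$ with a minimizer, the lemma is actually false, so your case analysis and the continuity hypothesis used to handle $x_0=0$ are a genuine repair rather than a redundancy. (Both you and the paper tacitly assume the minima over spheres are attained, which also deserves a hypothesis in infinite dimensions; you at least flag this.) Nothing downstream in the paper is affected, because the lemma is applied only to the continuous functional $J(u)=\frac 12\langle Su,u\rangle-\langle f,u\rangle$, whose minimizer $S^{-1}f$ is nonzero whenever $f\ne 0$; but your version of the statement is the one that is actually true as written.
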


\begin{proof}   
	By assumption,  $ F(x_0)\leq \min_{||y||=r} F(y)$ for every $r>0$, and so also  $ F(x_0)\leq \inf_{r>0}\min_{||y||=r} F(y)$. On the other hand, for every $x\in\R^n$ we have that $F(x)\ge \min_{||y||=||x||} F(y)$, and so 
	$ \min_{x\in\R^d} F(x)\ge \inf_{||x||>0}\min_{||y||=||x||} F(y)$.  Thus, $ F(x_0)\ge \inf_{r>0}\min_{||y||=r} F(y)$ and the proof is concluded. 
\end{proof}

\subsection{Lax-Milgram theorem for frames}

Let ${\mathcal F}=\{x_j\}_{j\in J}$ be a frame  for  $\H$. Let $S(x)=\sum_{j \in J}  \langle x_j, x\rangle x_j$ be the frame operator of $\F$, and let $s:\H\times \H\to\C$,
$$s(u,v)= \l Su, \, v\r.
$$   
In view of the frame inequality \eqref{eq66} and the fact that $S$ is self-adjoint,  the   bilinear form $s(\cdot,\, \cdot) $ is bounded,  coercive and symmetric.    The  Lax-Milgram and Stampacchia theorems yield    the following

\begin{theorem}
	\label{th:laxframe}
	Let $\F $  be a frame of $\H$  with frame operator $S$.  Let    $ J(u)$ be  defined as in \eqref{j1}.   
	
	a)  For a given $f\in \H$,  there exists a unique $v=S^{-1}f $ for which   $\l Sv, u\r=\l f, u\r$  for every $u\in \H$. Furthermore,   $ v $   minimizes the functional 
	\begin{equation}\label{j1} J(u)= \frac 12\l S u,\, u\r-\l f, u \r.
	\end{equation}
	
	b) If $K$ is a closed subspace of    $\H$ and  $P:\H\to\H$  is the orthogonal projection on $K$,  the  operator $PSP$  is  invertible on   $P(\H)$, and $v=(PSP)^{-1} P f$ attains the  minimum of the functional $J(u)$ on $P(\H)$.
\end{theorem}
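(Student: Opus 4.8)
The plan is to recognize that Theorem \ref{th:laxframe} is a direct specialization of Corollary \ref{lm:lax2} to the case $A=S$, so that nearly all of the work has already been done in building the abstract Lax-Milgram/Stampacchia machinery. First I would record that the bilinear form $s(u,v)=\l Su, v\r$ satisfies every hypothesis demanded by Theorem \ref{lm:lax} and Corollary \ref{lm:lax2}: the upper frame bound in \eqref{eq66} gives boundedness with constant $B$, the lower frame bound gives coercivity with constant $A$, and self-adjointness of $S$ gives symmetry. Since $S$ is also bounded and invertible, it plays exactly the role of the operator $A$ in Corollary \ref{lm:lax2}.

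For part a), I would invoke Corollary \ref{lm:lax2}(a) with $K=\H$. This yields a unique $v\in\H$ satisfying $\l Sv, u\r=\l f, u\r$ for every $u\in\H$; since this holds for all $u$, it forces $Sv=f$, and invertibility of $S$ gives $v=S^{-1}f$. Because $s$ is symmetric (equivalently, $S$ is self-adjoint), the same corollary identifies this $v$ as the minimizer of the functional $J(u)=\frac12\l Su, u\r-\l f, u\r$ appearing in \eqref{j1}.

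For part b), I would apply Corollary \ref{lm:lax2}(b) verbatim with $A=S$ and the given closed subspace $K$. This immediately delivers invertibility of $PSP$ on $P(\H)$ — a conclusion also consistent with Proposition \ref{PF}, which identifies $PSP$ as the frame operator of the frame $P(\F)$ on $P(\H)$ — and characterizes $v=(PSP)^{-1}Pf$ as the minimizer of $J$ over $P(\H)$. One small point to verify here is that the restriction of the linear functional $u\mapsto\l f, u\r$ to $K$ coincides with $u\mapsto\l Pf, u\r$, since $\l f, u\r=\l f, Pu\r=\l Pf, u\r$ for $u\in K$; this is what produces $Pf$ rather than $f$ in the formula for $v$.

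Since the abstract results carry all the weight, I do not expect any substantial obstacle. The only things to be careful about are confirming that the frame inequality \eqref{eq66} genuinely supplies both boundedness and coercivity with the correct constants, and justifying the passage from ``$\l Sv, u\r=\l f, u\r$ for all $u$'' to ``$Sv=f$'' (legitimate by letting $u$ range over all of $\H$). The remainder is bookkeeping, translating the statement of Corollary \ref{lm:lax2} into the language of frames.
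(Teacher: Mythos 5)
Your proposal is correct and follows exactly the route the paper takes: the paper's proof is the one-line statement that the theorem ``follows from Corollary \ref{lm:lax2}'', having already noted just before the theorem that $s(u,v)=\l Su,\,v\r$ is bounded, coercive and symmetric by the frame inequality \eqref{eq66} and self-adjointness of $S$. Your additional checks (passing from $\l Sv,u\r=\l f,u\r$ for all $u$ to $Sv=f$, and the identity $\l f,u\r=\l Pf,u\r$ for $u\in K$) are exactly the bookkeeping the paper leaves implicit.
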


\begin{proof}  Follows from   Corollary \ref{lm:lax2}.\end{proof}

\noindent
{\it Remarks.}
1) In  finite dimensional Hilbert spaces,  every positive self-adjoint operator is the frame operator of a certain frame  \cite{CasL}.  

2) Since  $S= T T^*$, where   $T^*: \H  \to \ell^2(J)$  is the    analysis operator of $\F $,  we can also write 
$\l Sv, u\r= \l T^*v, T^*u\r$, and 
\begin{equation}\label{j2} J(u)=  \frac 12 ||T^*u||^2   -\langle f,u\rangle. 
\end{equation}

  \section{Applications of  Lax-Milgram theorem}
 
In this section we  use   Lax-Milgram theorem to characterize scalable and piecewise scalable frames. We start with  the following

\begin{Thm} \label{L-p}
	Let   ${\mathcal F}=\{x_j\}_{j\in J}$ be a frame in $\H$   with frame constants $A<B$. Let $S$ be the frame operator of $\F$ and, for a given $f\in \H$, let 
 $ J(u)= \frac 12\l S u,\, u\r-\l f, u \r$ be as in \eqref{j1}.  
	\begin{enumerate}
		\item [a)]  We have
		$$
		-\frac{ ||f||^2} {2  A}\leq \min_{u\in \H}J(u)\leq -\frac{ ||f||^2}{2  B}.
		$$
		 
		\item [b)]    $\F$ is Parseval if and  only if,  for every $f\in\ H$
		$$ 	\min_{ u\in \H }J(u) =-\frac 12 ||f||^2. 
		$$
	\end{enumerate}
	
\end{Thm}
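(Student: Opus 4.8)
The plan is to treat a) by two independent elementary quadratic estimates and to reduce b) to identifying the exact value of the minimum, which Theorem~\ref{th:laxframe} hands us for free. Throughout I would use only the frame inequality \eqref{eq66}, i.e. $A\|u\|^2\le\langle Su,u\rangle\le B\|u\|^2$, and the definition $J(u)=\tfrac12\langle Su,u\rangle-\langle f,u\rangle$ from \eqref{j1}. Note at the outset that Theorem~\ref{th:laxframe}a) guarantees the minimum is attained at $v=S^{-1}f$, so ``$\min$'' is legitimate; substituting and using self-adjointness of $S$ gives the closed form $\min_{u\in\mathcal H}J(u)=J(S^{-1}f)=-\tfrac12\langle S^{-1}f,f\rangle$, which I would record for use in b).

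For the left-hand inequality in a) I would bound $J$ from below uniformly in $u$. Coercivity gives $\langle Su,u\rangle\ge A\|u\|^2$, and Cauchy--Schwarz gives $\langle f,u\rangle\le\|f\|\,\|u\|$, whence $J(u)\ge\tfrac{A}{2}\|u\|^2-\|f\|\,\|u\|$. The right-hand side depends on $u$ only through $t=\|u\|$, and minimizing the scalar quadratic $\tfrac{A}{2}t^2-\|f\|t$ over $t\ge0$ (vertex at $t=\|f\|/A$) yields $-\tfrac{\|f\|^2}{2A}$. Hence $J(u)\ge-\tfrac{\|f\|^2}{2A}$ for every $u$, so $\min_u J(u)\ge-\tfrac{\|f\|^2}{2A}$.

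For the right-hand inequality in a) I would not bound $J$ globally but simply test it at the single point $u=f/B$. A direct computation gives $J(f/B)=\tfrac{1}{2B^2}\langle Sf,f\rangle-\tfrac{1}{B}\|f\|^2$, and the upper frame bound $\langle Sf,f\rangle\le B\|f\|^2$ then gives $J(f/B)\le\tfrac{\|f\|^2}{2B}-\tfrac{\|f\|^2}{B}=-\tfrac{\|f\|^2}{2B}$. Since the minimum is at most any particular value, $\min_u J(u)\le-\tfrac{\|f\|^2}{2B}$, completing a). (Alternatively, one could derive both bounds at once from the closed form of paragraph one by noting that $A I\le S\le B I$ forces $\tfrac1B I\le S^{-1}\le\tfrac1A I$, but the two test estimates avoid invoking inverse monotonicity of the operator order.)

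For b), if $\mathcal F$ is Parseval then $S=I$, so $S^{-1}=I$ and the closed form gives $\min_u J(u)=-\tfrac12\|f\|^2$ for every $f$. Conversely, if $\min_u J(u)=-\tfrac12\|f\|^2$ for every $f$, then $\langle S^{-1}f,f\rangle=\langle f,f\rangle$ for every $f\in\mathcal H$; since $S^{-1}$ is self-adjoint, equality of the two quadratic forms forces $S^{-1}=I$, hence $S=I$ and $\mathcal F$ is Parseval. The computations are routine, and the only step I would flag as the genuine (if mild) obstacle is this last implication: passing from equality of quadratic forms to the operator identity $S^{-1}=I$ relies on self-adjointness together with the polarization identity, rather than being purely formal.
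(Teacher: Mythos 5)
Your proof is correct, and in part (a) it is in substance the paper's own computation, just compressed: the paper restricts $J$ to the spheres $\sigma_t=\{u:\|u\|=t\}$, applies the frame bound and Cauchy--Schwarz on each sphere, and then optimizes over the radius $t$ (this is where its Lemma~\ref{el} enters); your single test point $u=f/B$ is exactly the paper's optimal radius $t=\|f\|/B$ in the optimal direction $f/\|f\|$, and your lower bound is the paper's pointwise bound followed by the same scalar minimization, so the only difference is that you bypass Lemma~\ref{el} and the sphere decomposition. The genuine divergence is in the converse direction of (b). The paper never identifies the value of the minimum: it first rescales the frame so that $B=1$, then uses only the inequality half of the hypothesis, namely $\tilde J(u)=J(u)+\tfrac12\|f\|^2\ge 0$ for all $u$, evaluated at the test vector $u=f$, to get $\langle Sf,f\rangle\ge\|f\|^2$, which together with the normalization $B=1$ forces equality. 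You instead invoke Theorem~\ref{th:laxframe} to write $\min_{u\in\mathcal H}J(u)=J(S^{-1}f)=-\tfrac12\langle S^{-1}f,f\rangle$, deduce $\langle S^{-1}f,f\rangle=\langle f,f\rangle$ for all $f$, and conclude $S^{-1}=I$ by polarization for self-adjoint operators (you are right to flag that symmetry is what makes polarization work here, since the scalars are real). Your route leans on the full strength of Lax--Milgram (invertibility of $S$ and the explicit minimizer), whereas the paper's converse is more elementary; on the other hand, your route buys a cleaner logical structure, because the paper's opening move ``after perhaps re-scaling the vectors of the frame, we can assume that $B=1$'' is delicate: replacing $x_j$ by $\lambda x_j$ turns the minimum $-\tfrac12\|f\|^2$ into $-\tfrac{1}{2\lambda^2}\|f\|^2$, so the hypothesis of (b) is not invariant under this rescaling and the reduction needs a justification the paper does not supply. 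Your argument requires no normalization at all, so it sidesteps that subtlety entirely.
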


\begin{proof} 
	(a)  For a  given  $t\ge  0$, let $J_t$ be the restriction of $J$ to the set $\sigma_t=\{u\ :\ ||u||=t\}$.  We can see at once that  for every $u\in\sigma_t$,
	$$J_t(u)=   \left(\frac 12 \langle Su    ,\ u \rangle -\langle f, u \rangle\right)\leq  \left(\frac B2 ||u||^2 -\langle f, u \rangle\right)= \frac {Bt^2}{2}-  \langle f, u \rangle. 
	$$
By  Lemma \ref{el}, 
	\begin{equation} 
		\min_{ u\in\ H} J(u)= \inf_{t>0}\min_{ u\in \sigma_t} J_t(u) \leq  \inf_{t>0}\left(\frac {Bt^2}{2}-  \max_{ u\in \sigma_t } \langle f, u \rangle\right) 
	\end{equation}
	By Cauchy-Schwartz inequality, $\langle f, u \rangle\leq   t||f|||$  whenever $ u\in \sigma_t$, and  
	equality  is attained when $u= \frac{tf}{||f||}$. Thus, 
	$$
	\min_{ u\in\ H} J(u) \leq  \inf_{t>0}\left(\frac  {Bt^2}{2}- t||f||\right).
	$$
	The minimum of the   right-hand side  is attained when   $t= \frac{||f||}{B}$, and  so $\dsize\min_{ u\in\ H} J(u)\leq -\frac{ ||f||^2}{2B}.$
We can prove that $\dsize\min_{ u\in\ H} J(u)\ge -\frac{ ||f||^2}{2A}$ in a  similar manner.

	\medskip
	\noindent
	(b) If $\F$ is Parseval,  we have $A=B=1$ and  by (a), $\min_{ u\in\ H} J(u)=-\frac{||f||^2}{2}$.
	
	We now  assume that $\min_{ u\in\ H} J(u)= -\frac 12 ||f||$ for every $f\in\H$, and we prove that $\F$ is Parseval. 
	After perhaps re-scaling the vectors of the frame, we can assume that  $B=1$;    thus, $ \langle  Sf, \, f\rangle\leq   ||f||^2 $ for  every $f\in\H$.
	Let  $\tilde J (u)=J(u)+ \frac 12 ||f||^2 =\frac 12( \langle  Su, \, u\rangle -\l f, \ 2u-f\r )$.   By assumption,
	$\tilde J(u)\ge 0$ for all  $u\in \H$, and so  also 
	$\tilde J(f)= \langle  Sf, \, f\rangle -  ||f||^2\ge 0$.  We can infer that   $\langle  Sf, \, f\rangle =||f||^2$, and  hence that $\F$ is Parseval.
	\end{proof}

 \medskip
 Let $\F=\{ x_j\}_{j\in J} $ be a frame in $\H$; Let $P:\H\to
 H$  be an orthogonal projection, and   let   $\vec a, \, \vec b$ and $\vec c \in \ell^\infty(J)$ 
  for which  the sets 
  $\F_{\vec c} =\{c_jx_j\}_{j\in J} $  and   $\F_{\vec a, \vec b} =\{a_jPx_j+b_j(I-P)x_j\}_{j\in J}$ are frames in $\H$.   Recall that 
  $D_{\vec c}T^* $ and  $D_{\vec a}T^*P+ D_{ \vec  b} T^* (I-P)$, with $D_{\vec p}$ defined as in \eqref{dil} are     the analysis operators of $\F_{\vec c} $ and 
$\F_{\vec a, \vec b}$;   we have denoted the frame operators of $\F_{\vec c}$ and  $\F_{\vec a, \vec b}$  with $S _{\vec c} $ and  $S _{\vec a, \vec b} $. 
  
 We prove the following

 \begin{Cor} \label{L-D}

 (a)  A frame ${\mathcal F}=\{x_j\}_{j\in J}$ is scalable  if and only if there exists  $\vec c\in \ell^\infty(J)$  such that,  for every $f\in H$, 
 \begin{equation}\label{a3} 	\min_{ u\in \H }\{ \frac 12||  D_{\vec c}T^* \, u||^2  -\langle f, u \rangle\} =-\frac 12 ||f||^2.  	
\end{equation}

(b)
$\F$ is piecewise scalable   with projection $P$ if and only if there exist constants $\vec a,\ \vec b\in \ell^\infty(J)$ for which i) and ii) hold.
 
i) For every $u, w\in\H$,
 \begin{equation}\label{a2}
 \l   D_{\vec a}T ^*Pu, \  D_{\vec b}T ^*(I-P)w\r =   0, \end{equation}
  
ii) for every $f\in H$,  
\begin{equation}\label{a1}\min_{ u\in \H } \big\{ \frac 12 (|| D_{\vec a}  T^*  Pu ||^2+ || D_{\vec b}   T ^ * (I-P)u||^2) - \langle f, u\rangle\big\}=-\frac 12 ||f||^2. \end{equation}
 
\end{Cor}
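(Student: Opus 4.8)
The plan is to reduce both equivalences to the Parseval characterization of Theorem \ref{L-p}(b), applied not to $\mathcal{F}$ itself but to the rescaled systems $\mathcal{F}_{\vec{c}}$ and $\mathcal{F}_{\vec{a},\vec{b}}$. The bridge is the remark following Theorem \ref{th:laxframe}: for any frame the functional $J$ of \eqref{j1} may be written as $J(u)=\frac12\|\Theta u\|^2-\langle f,u\rangle$, where $\Theta$ is the analysis operator, cf. \eqref{j2}. I would therefore, in each case, identify the analysis operator of the scaled frame and read off the quadratic form appearing in \eqref{a3} and \eqref{a1}.

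For part (a), recall that $\mathcal{F}$ is scalable exactly when some $\vec{c}\in\ell^\infty(J)$ makes $\mathcal{F}_{\vec{c}}$ a Parseval frame. Its analysis operator is $T^*_{\vec{c}}=D_{\vec{c}}T^*$, so the functional attached to $\mathcal{F}_{\vec{c}}$ is $\frac12\langle S_{\vec{c}}u,u\rangle-\langle f,u\rangle=\frac12\|D_{\vec{c}}T^*u\|^2-\langle f,u\rangle$, which is precisely the bracketed expression in \eqref{a3}. Applying Theorem \ref{L-p}(b) to $\mathcal{F}_{\vec{c}}$ yields that $\mathcal{F}_{\vec{c}}$ is Parseval if and only if \eqref{a3} holds for every $f$; quantifying over $\vec{c}$ then gives (a).

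For part (b), I would start from Lemma \ref{pf}, which states that piecewise scalability with $P,\vec{a},\vec{b}$ is equivalent to its conditions a) and b). Condition a) of Lemma \ref{pf} is verbatim condition (i)/\eqref{a2}, so it remains to show that, under (i), condition b) of Lemma \ref{pf} is equivalent to (ii)/\eqref{a1}. The key computation is that (i) kills the cross term in \eqref{f2}: since $\langle Ru,u\rangle=\langle D_{\vec{a}}T^*Pu,\,D_{\vec{b}}T^*(I-P)u\rangle=0$ by \eqref{a2}, the quadratic form of $R+R^*$ vanishes, so that $\langle S_{\vec{a},\vec{b}}u,u\rangle=\|D_{\vec{a}}T^*Pu\|^2+\|D_{\vec{b}}T^*(I-P)u\|^2$, which is exactly the quadratic part of the functional in \eqref{a1}. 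Hence, under (i), $S_{\vec{a},\vec{b}}$ is the frame operator of $\mathcal{F}_{\vec{a},\vec{b}}$ and the functional in \eqref{a1} is its $J$; Theorem \ref{L-p}(b) shows $\mathcal{F}_{\vec{a},\vec{b}}$ is Parseval iff \eqref{a1} holds, while condition b) of Lemma \ref{pf} says exactly $S_{\vec{a},\vec{b}}=I$, i.e. that $\mathcal{F}_{\vec{a},\vec{b}}$ is Parseval. Chaining these equivalences gives (b).

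The main point requiring care is that Theorem \ref{L-p} is an assertion about genuine frames, so invoking part (b) of it for $\mathcal{F}_{\vec{c}}$ and $\mathcal{F}_{\vec{a},\vec{b}}$ presupposes that these scaled systems are frames; this is precisely the standing hypothesis fixed just before the statement, and in the forward directions it is automatic since a Parseval frame is a frame. I would also be careful that the cross-term cancellation in part (b) must hold for all $u$, not merely on the diagonal one might extract from b); this is why condition (i) has to be carried as a standing assumption throughout the argument rather than derived along the way.
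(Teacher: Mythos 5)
Your proposal is correct and takes essentially the same route as the paper: part (a) is Theorem \ref{L-p}(b) applied to $\F_{\vec c}$ via the identity $\l S_{\vec c}u,\,u\r=||D_{\vec c}T^*u||^2$, and part (b) combines Lemma \ref{pf} with Theorem \ref{L-p}(b) after observing that \eqref{a2} kills the cross term $R+R^*$ in \eqref{f2}, so that $\l S_{\vec a,\vec b}u,\,u\r$ equals the sum of squares appearing in \eqref{a1}. The only difference is organizational (you chain equivalences through condition b) of Lemma \ref{pf}, whereas the paper proves the two implications directly), and your closing remark about the scaled systems needing to be frames matches the paper's standing hypothesis stated just before the corollary.
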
 
\begin{proof} 
(a) 	Since $\l   S_{\vec c} u,\, u\r =   || D_{\vec c}T^*  u||^2 $,  \eqref{a3} follows directly from Theorem \ref{L-p}.
	
(b) If $\F_{\vec a, \vec b}$ is Parseval, 
 by  Theorem \ref{L-p},   we have that 
\begin{equation}\label{bk}
 \min_{u\in\H} J(u)=\min_{u\in\H}\{\frac 12 \l S_{\vec a, \vec b} u,\, u\r-\l f,\,u\r\}= -\frac 12||f||^2.\end{equation} By  Lemma \ref{pf},   \eqref{a2} holds and 	$ S_{\vec a, \vec b}= PS_{\vec a}P+ (I-P)S_{\vec b} (I-P). $  Thus, 
\begin{equation}\label{bef}
  \l S_{\vec a, \vec b} u,\, u\r   = \l PS_{\vec a}Pu + (I-P)S_{\vec b} (I-P)u,\ u\r = || D_{\vec a}  T^*P u ||^2+ || D_{\vec b}   T ^*(I-P)  u||^2. \end{equation}
  \eqref{bk} and \eqref{bef} yield. 
  \eqref{a1}.

Assume that  \eqref{a2}  and \eqref{a1} hold.  By Lemma \ref{pf}, the frame operator of $\F_{\vec a, \vec b}$  is  $S_{\vec a, \vec b}= PS_{\vec a}P+ (I-P)S_{\vec b} (I-P)$ and 
$ || D_{\vec a}  T^*P u ||^2+ || D_{\vec b}   T ^*(I-P)  u||^2=\l S_{\vec a, \vec b}u,\, u\r$.   Thus, \eqref{a1} is equivalent to  $\min_{ u\in \H }\{ \frac 12\l S_{\vec a,b}u, \, u\r  -\langle f, u \rangle\} =-\frac 12 ||f||^2$,   	
and  by Theorem  \ref{L-p} we can conclude that $\F_{\vec a,\vec b}$ is Parseval.

	 \end{proof}

 	\begin{Cor}\label{New-scal}
 a)  A frame $ \F=\{x_j\}_{j\in J}$ is  scalable if and only if   there exist  constants $\vec c\in \ell^\infty(J)$ for which    the following inequality  holds for every $u,\, w,\, f\in \H$.
	\begin{equation}\label{newi}
		-\frac 12 ||f||^2 \leq     \frac 12 ||D_{\vec c}T^*\, u||^2 -\langle f, u \rangle   \leq   \frac 1{2 } ||u||^2 -\l f,u\r. 
	\end{equation}

b) $ \F $ is piecewise scalable with projection $P$  if and only if   there exist  constants $\vec a, \vec b\in \ell^\infty(J)$ for which   \eqref{a2}  and  the following inequality  holds for every $u,\, f\in \H$.
\begin{equation}\label{newii}
	-\frac 12 ||f||^2 \leq     \frac 12 (||D_{\vec a}T^* P\, u||^2 +||D_{\vec b}T^* (I-P)\, u||^2)  -\langle f, u \rangle   \leq   \frac 1{2 } ||u||^2 -\l f,u\r. 
\end{equation}
\end{Cor}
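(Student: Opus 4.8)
The plan is to recognize that the two–sided inequalities \eqref{newi} and \eqref{newii} merely repackage the single minimization identities of Corollary \ref{L-D} together with an upper–frame–bound condition, and then to close the argument by a squeeze using the upper estimate in Theorem \ref{L-p}(a). For part (a), I would first observe that the middle expression is exactly the functional $J_{\vec c}(u)=\frac12\langle S_{\vec c}u,u\rangle-\langle f,u\rangle$ attached to the scaled frame $\F_{\vec c}$, via the identity $\|D_{\vec c}T^*u\|^2=\langle S_{\vec c}u,u\rangle$. The rightmost term $\frac12\|u\|^2-\langle f,u\rangle$ differs from $J_{\vec c}(u)$ only by replacing $\langle S_{\vec c}u,u\rangle$ with $\|u\|^2$; hence, cancelling $\langle f,u\rangle$, the right inequality holds for every $u$ if and only if $\langle S_{\vec c}u,u\rangle\le\|u\|^2$ for all $u$, i.e. the upper frame bound satisfies $B_{\vec c}\le 1$. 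The left inequality, read over all $u$, says precisely that $\min_{u\in\H}J_{\vec c}(u)\ge-\frac12\|f\|^2$ for every $f$.

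Next I would prove the equivalence. If $\F$ is scalable with scaling $\vec c$, then $\F_{\vec c}$ is Parseval, so $B_{\vec c}=1\le1$, while Theorem \ref{L-p}(b) (equivalently Corollary \ref{L-D}(a)) gives $\min_u J_{\vec c}(u)=-\frac12\|f\|^2$; thus both inequalities in \eqref{newi} hold. Conversely, assume \eqref{newi}. The right inequality gives $B_{\vec c}\le1$, so Theorem \ref{L-p}(a) applied to $\F_{\vec c}$ yields $\min_u J_{\vec c}(u)\le-\frac{\|f\|^2}{2B_{\vec c}}\le-\frac12\|f\|^2$. Combined with the lower bound $\min_u J_{\vec c}(u)\ge-\frac12\|f\|^2$ from the left inequality, this forces $\min_u J_{\vec c}(u)=-\frac12\|f\|^2$ for every $f$, whence $\F_{\vec c}$ is Parseval by Theorem \ref{L-p}(b) and $\F$ is scalable.

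For part (b) I would run the identical argument with $\F_{\vec a,\vec b}$ in place of $\F_{\vec c}$. The single extra ingredient is that the middle expression equals $\frac12\langle S_{\vec a,\vec b}u,u\rangle$ only once the cross term $R$ vanishes; this is supplied by hypothesis \eqref{a2} through Lemma \ref{pf}, which gives $S_{\vec a,\vec b}=PS_{\vec a}P+(I-P)S_{\vec b}(I-P)$ and the quadratic–form identity $\langle S_{\vec a,\vec b}u,u\rangle=\|D_{\vec a}T^*Pu\|^2+\|D_{\vec b}T^*(I-P)u\|^2$. After that substitution the right inequality again encodes $B_{\vec a,\vec b}\le1$, the left encodes $\min_u J_{\vec a,\vec b}(u)\ge-\frac12\|f\|^2$, and the same squeeze via Theorem \ref{L-p}(a) pins the minimum to $-\frac12\|f\|^2$, which by Corollary \ref{L-D}(b) is equivalent to $\F_{\vec a,\vec b}$ being Parseval.

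The step I expect to be the crux is the converse squeeze: the realization that the right–hand inequality is not decorative but fixes $B_{\vec c}\le1$ (respectively $B_{\vec a,\vec b}\le1$), which is exactly what converts the upper estimate $\min J_{\vec c}\le-\|f\|^2/(2B_{\vec c})$ of Theorem \ref{L-p}(a) into $\min J_{\vec c}\le-\frac12\|f\|^2$, so that the lower bound from the left inequality then sandwiches the minimum at $-\frac12\|f\|^2$. Everything else is bookkeeping with the frame–operator quadratic forms. I would also note that the variable $w$ in the quantifier of \eqref{newi} is vestigial, since it does not appear in the displayed inequality, and enters the statement only through \eqref{a2} in part (b).
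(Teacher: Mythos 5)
Your proof is correct, and your forward direction is essentially the paper's; but your converse takes a genuinely different route. The paper disposes of the converse in one line: set $u=f$ in \eqref{newi}, so the right-hand term becomes $\frac 12 \|f\|^2-\|f\|^2=-\frac 12\|f\|^2$ and the sandwich collapses to $\l S_{\vec c}f,\, f\r=\|f\|^2$ for every $f$, which is exactly the Parseval identity --- no frame-bound bookkeeping, no appeal to Theorem \ref{L-p}, and no need to know in advance that $\F_{\vec c}$ is a frame (that comes out for free). You instead decode the two inequalities separately --- the right one as the Bessel bound $B_{\vec c}\le 1$, the left one as $\min_{u} J_{\vec c}(u)\ge -\frac 12\|f\|^2$ --- and then squeeze using the estimate $\min_{u} J_{\vec c}(u)\le -\|f\|^2/(2B_{\vec c})$ from Theorem \ref{L-p}(a), concluding via Theorem \ref{L-p}(b). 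This is more conceptual and makes transparent what each inequality in \eqref{newi} encodes, but it costs slightly more: Theorem \ref{L-p}(a) is stated for frames, and its ``$\min$'' presupposes coercivity (a positive lower frame bound) so that the minimum is attained; your squeeze therefore leans on the section's standing assumption that $\vec c$ is chosen so that $\F_{\vec c}$ is a frame. If one wanted to drop that assumption, you would have to replace minima by infima and obtain the upper estimate by testing $J_{\vec c}$ along a ray $u=tf$, $t>0$ --- and the paper's substitution $u=f$ is precisely such a test, done once and for all. The same comparison applies verbatim to part (b), where both you and the paper funnel through Lemma \ref{pf} and \eqref{a2} to identify the quadratic form with $\l S_{\vec a,\vec b}\,u,\, u\r$; the paper omits these details, saying only that (b) follows by a similar argument. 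Your side remark that the variable $w$ is vestigial in \eqref{newi} is also correct: it plays a role only through \eqref{a2} in part (b).
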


\begin{proof}  
   
We only prove a) since  the proof of b)  follows  from Lemma  \ref{pf} and a similar argument.

  Assume   that $\F$ is scalable.    
     Let    $\vec c\in \ell^\infty(J)$ for which    $\l S_{\vec c}u, u\r =||D _{\vec c}T^*u||^2=||u||^2$  for every $u\in \H$;
     by Corollary \ref{L-D}, for every $u,\  f\in\H$,     we have that   
  $$
  \frac 12 ||D _{\vec c}T^*u||^2  -\langle f, u \rangle   \ge \min_{u\in \H}\{\frac 12 ||D _{\vec c}T^*u||^2  -\langle f, u \rangle \}\ge -\frac 1{2 } ||f||^2.  
  $$
 and so 
    $$
    -\frac 12 ||f||^2 \leq     \frac 12 ||D _{\vec c}T^*u||^2  -\langle f, u \rangle   \leq   \frac 1{2 } ||u||^2 -\l f,u\r 
    $$
which is \eqref{newi}

 If \eqref{newi}  holds, then, when $u=f$ we have that 
$ 
-\frac 12 ||f||^2 \leq     \frac 12 \l S_{\vec c}f, f\r  -||f||^2    \leq   -\frac 1{2 } ||f||^2 
$ 
and so $\l S_{\vec c}f, f\r=||f||^2 $ for every $f\in \H$.

  \end{proof}

\section{Approximating the inverse frame operator}

Let ${\mathcal F}=\{x_j\}_{j\in J}$ be a frame for a Hilbert space $\H$ with frame operator  $S(x)=\sum_{j \in J}  \langle x_j, x\rangle x_j$.  As remarked in the introduction, every $f\in\H$  can be represented as in 
\eqref{invframe} in terms of the inverse of the frame operator. Since   evaluating $S^{-1}$ can be very difficult, even for finite frames,  it is important to   approximate $S^{-1}$, or at least to approximate the frame coefficients of $f$.  

\subsection{Approximations  with projections} 
 
For a given $f\in\H$, we let $J_f(u)=   \frac 12\l S u, u \r-\l f, u\r$.
 By Lax-Milgram theorem,   $ v=S^{-1}  f$
is the only solution of the problem   $\P$ below.
\begin{equation}
	\label{eq:problemP}
	\mbox{Problem  ${\mathcal P}$}: \qquad \mbox{  Given $f\in\H$ ,  find $v\in \H$  for which $J_f(v)\leq J_f(u)$ for every $u\in \H$.}
 %
\end{equation}

 Consider a  family of projections $P_N:\H\to \H$ such that, for every  $N\in\N$  
 \begin{equation}\label{projN}  \lim_{N\to\infty} P_N(u)=u \quad\mbox{and}  \quad P_{N+1}(\H)\supset P_N(\H)  .\end{equation}     We let $\H_N=P_N(\H)$, and   we   consider the   following 
\begin{equation}
	\label{eq:problemPN}
	\mbox{Problem  $\P_N$}: \qquad \mbox{  Given $f\in\H$,  find $v_N\in \H_N$  for which $J_f(v_N)\leq J_f(u)$ for every $u\in \H_N$.}
\end{equation}
 By Theorem \ref{th:laxframe},  $v_N=(P_NSP_N)^{-1}P_Nf$ is  the only  solution of Problem $\P_N$ in  $ \H_N$.   
 
The following theorem shows that the  solutions of the problem $\P_N$ provide a good approximation of  $v=S^{-1} f$.

\begin{Thm}\label{V2}
 Given $f\in\H$, 	let $v$ and $v_N $ be the  solutions of Problems $\P$ and  $\P_N$ defined above. Then, 
$$\lim_{N\to\infty}||v_N- v||=0.
$$
Equivalently, 
\begin{equation}\label{projf}\lim_{N\to\infty}||(P_NSP_N)^{-1} P_Nf- S^{-1} f||=0.
	\end{equation}
	\end{Thm}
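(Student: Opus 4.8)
The plan is to recast the two minimization problems as best-approximation problems in the \emph{energy norm} induced by $S$, and then to exploit the density hypothesis \eqref{projN}. First I would introduce the energy inner product $[u,w] := \langle Su, w\rangle$ with associated norm $\|u\|_S = \langle Su, u\rangle^{1/2}$. Because $S$ is self-adjoint and the frame inequality \eqref{eq66} gives $A\|u\|^2 \leq \langle Su, u\rangle \leq B\|u\|^2$, the form $[\cdot,\cdot]$ is a genuine inner product (positive-definiteness is exactly coercivity), and its norm is equivalent to the ambient norm, namely $\sqrt{A}\,\|u\| \leq \|u\|_S \leq \sqrt{B}\,\|u\|$ for every $u\in\H$. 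This equivalence is what will let me pass between the two norms at the end.

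Next I would complete the square in $J_f$. Writing $v = S^{-1}f$, so that $f = Sv$, self-adjointness of $S$ gives the identity
\begin{equation}\label{completesq}
J_f(u) = \tfrac12\langle Su, u\rangle - \langle Sv, u\rangle = \tfrac12\|u - v\|_S^2 - \tfrac12\|v\|_S^2 .
\end{equation}
Since the term $-\tfrac12\|v\|_S^2$ is independent of $u$, minimizing $J_f$ over any subset of $\H$ is the same as minimizing $u \mapsto \|u - v\|_S$ over that subset. In particular, because $v_N$ solves Problem $\P_N$, the vector $v_N$ is precisely the best approximation to $v$ from $\H_N$ in the energy norm:
\begin{equation}\label{bestapprox}
\|v_N - v\|_S = \min_{u\in\H_N}\|u - v\|_S .
\end{equation}

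It then remains to exhibit an explicit competitor in $\H_N$ whose energy distance to $v$ tends to zero, and the natural candidate is $P_N v \in \H_N$. Combining \eqref{bestapprox}, the norm equivalence, and the density hypothesis \eqref{projN}, I would estimate
\begin{equation}\label{finalest}
\sqrt{A}\,\|v_N - v\| \leq \|v_N - v\|_S \leq \|P_N v - v\|_S \leq \sqrt{B}\,\|P_N v - v\| ,
\end{equation}
and since $P_N v \to v$ in $\H$ by \eqref{projN}, the right-hand side vanishes as $N\to\infty$. Dividing by $\sqrt{A}>0$ yields $\|v_N - v\|\to 0$, and the equivalent statement \eqref{projf} follows from the identifications $v = S^{-1}f$ and $v_N = (P_NSP_N)^{-1}P_Nf$ recorded just before the theorem via Theorem \ref{th:laxframe}.

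The only genuinely delicate point is the best-approximation identity \eqref{bestapprox}; once the completion of the square \eqref{completesq} is in place, everything else is the standard C\'ea-type argument. What makes \eqref{bestapprox} work is that the quadratic part of $J_f$ is exactly the square of the energy norm, so that the Lax--Milgram minimizers $v$ and $v_N$ coincide with the $S$-orthogonal projection of $v$ onto $\H$ and onto $\H_N$ respectively. I would note in passing that the nesting condition $P_{N+1}(\H)\supset P_N(\H)$ is not actually needed for the convergence --- density of $\bigcup_N \H_N$ alone suffices --- although it does force the error $\|v_N - v\|_S$ to be monotone decreasing in $N$.
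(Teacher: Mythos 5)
Your argument is correct, but it establishes quasi-optimality by a different mechanism than the paper. The paper first converts the minimization problems into their variational formulations $\l Sv,u\r=\l f,u\r$ and $\l Sv_N,P_Nu\r=\l f,P_Nu\r$, extracts the Galerkin orthogonality $\l S(v-v_N),w\r=0$ for $w\in\H_N$, and proves a C\'ea-type bound in the ambient norm (Lemma \ref{l:new}, taken from Raviart--Thomas): $\|v-v_N\|\leq \frac{B}{A}\inf_{y\in\H_N}\|y-v\|$; it then takes $y=P_Nv$ and invokes Corollary \ref{C-vig} (Vigier's theorem) to conclude $\|v-P_Nv\|\to 0$. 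You instead work directly with the minimization formulation: completing the square gives $J_f(u)=\frac 12\|u-v\|_S^2-\frac 12\|v\|_S^2$, so $v_N$ is the best approximation to $v$ from $\H_N$ in the energy norm, and the equivalence $\sqrt{A}\,\|\cdot\|\leq\|\cdot\|_S\leq\sqrt{B}\,\|\cdot\|$ coming from \eqref{eq66} converts this into an ambient-norm estimate. This is the standard symmetric-case sharpening of C\'ea's lemma: you obtain the better constant $\sqrt{B/A}$ in place of the paper's $B/A$, and you bypass the variational equations entirely, at the price of using the symmetry of $S$ in an essential way (the energy form is an inner product only because $S$ is self-adjoint, whereas the paper's Galerkin argument survives for non-symmetric bounded coercive forms). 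Both proofs conclude identically, by testing against the competitor $P_Nv$ and using $P_Nv\to v$.

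One caveat on your closing remark: it is not accurate that density of $\bigcup_N\H_N$ alone suffices. What your proof actually uses is the strong (pointwise) convergence $P_Nv\to v$, which is the first half of hypothesis \eqref{projN}; granted that assumption, the nesting is indeed redundant for this theorem. But for non-nested projections, density of the union of the ranges does not imply $P_Nu\to u$: in $\ell^2(\N)$, let $P_N$ for even $N$ be the projection onto the span of $e_1,\dots,e_N$ and for odd $N$ the projection onto the span of $e_1$; the union of the ranges is dense, yet $P_Nu\not\to u$ in general. It is precisely nesting together with density of the union that yields pointwise convergence, via Vigier's theorem, in the paper's Corollary \ref{C-vig}.
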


From Theorem \ref{V2} follows that   for  every $f\in\H$ and every frame vector $x_j\in\F$,  
 $$\lim_{N\to\infty} \l v_N, x_j\r= \l v, x_j\r= \l S^{-1}f, x_j\r.
$$ Thus, the frame coefficients of $f$ can be approximated   in terms of the  $v_N$. 

To prove  Theorem \ref{V2},  we use the following 

\begin{theorem}(Vigier) \label{T-v}
	Let $\left\{A_n\right\}_{n \in \N}$ be a sequence  of   bounded self-adjoint operators on   $\H$. Then $\left(A_n\right)_{n \in \N}$ is strongly convergent if it is increasing and bounded above, or if it is decreasing and bounded below.   
\end{theorem}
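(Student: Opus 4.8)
The plan is to reduce the two cases to one and then prove Cauchyness of $\{A_n x\}$ for each fixed $x$. If $\{A_n\}$ is decreasing and bounded below, then $\{-A_n\}$ is increasing and bounded above, so it suffices to treat the increasing case: $A_n \le A_{n+1}$ for all $n$ and $A_n \le B$ for some bounded self-adjoint $B$. Since $B \le \|B\| I$, I would replace $B$ by $MI$ with $M = \|B\|$. Because strong convergence means convergence of $\{A_n x\}$ in $\H$ for every $x$, the task becomes showing that this sequence is Cauchy. First I would record a uniform norm bound: monotonicity gives $A_1 \le A_m \le A_n \le MI$ whenever $m \le n$, hence $-\|A_1\| I \le A_n \le MI$ for all $n$ and $0 \le A_n - A_m \le (M + \|A_1\|)I$; a positive operator dominated by $KI$ has operator norm at most $K$, so $\|A_n - A_m\| \le K := M + \|A_1\|$ uniformly.

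Next I would pass monotonicity to the scalar level: for fixed $x$ the real sequence $\langle A_n x, x\rangle$ is nondecreasing and bounded above by $M\|x\|^2$, hence convergent, hence Cauchy. The decisive step would be to convert this scalar control into control of $\|A_n x - A_m x\|$. For $n \ge m$ the operator $C := A_n - A_m$ is positive self-adjoint, so $(u,w) \mapsto \langle Cu, w\rangle$ is a positive semidefinite Hermitian form and obeys the Cauchy--Schwarz inequality $|\langle Cu, w\rangle|^2 \le \langle Cu, u\rangle \langle Cw, w\rangle$. Taking $w = Cu$ and bounding $\langle C(Cu), Cu\rangle \le \|C\|\,\|Cu\|^2 \le K\|Cu\|^2$ would give $\|Cu\|^4 \le K\,\langle Cu, u\rangle\,\|Cu\|^2$, and after dividing by $\|Cu\|^2$,
\begin{equation*}
\|(A_n - A_m)x\|^2 \le K\big(\langle A_n x, x\rangle - \langle A_m x, x\rangle\big).
\end{equation*}
The right-hand side tends to $0$ as $m, n \to \infty$, so $\{A_n x\}$ is Cauchy and converges; setting $Ax := \lim_n A_n x$ defines a linear operator that is bounded (since $\|Ax\| = \lim \|A_n x\| \le \max(\|A_1\|, M)\,\|x\|$) and self-adjoint (pass to the limit in $\langle A_n x, y\rangle = \langle x, A_n y\rangle$). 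This $A$ is the claimed strong limit.

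I expect the main obstacle to be exactly the upgrade from weak to strong convergence. Monotone boundedness hands over convergence of the quadratic forms $\langle A_n x, x\rangle$ almost immediately, but that is only weak-type information and does not by itself bound $\|A_n x - A_m x\|$. The generalized Cauchy--Schwarz inequality applied to the positive operators $A_n - A_m$, together with the uniform bound $K$ on their norms, is the precise mechanism that closes the gap; securing that uniform bound and choosing the test vector $w = (A_n - A_m)x$ are the two places where I would be most careful.
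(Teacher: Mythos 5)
Your proof is correct, but note that the paper itself does not prove this theorem at all: it quotes Vigier's theorem and refers the reader to \cite{Murphy2014} (Theorem 4.1). What you have written is the standard self-contained argument that the citation replaces: reduce the decreasing case to the increasing one by negation, extract a uniform bound $\|A_n-A_m\|\le K$ from monotonicity, observe that each quadratic form $\langle A_n x,x\rangle$ converges by scalar monotone convergence, and then upgrade this weak-type information to norm control of $A_nx-A_mx$ via the generalized Cauchy--Schwarz inequality for the positive operator $C=A_n-A_m$, tested against $w=Cx$, which yields $\|Cx\|^2\le K\langle Cx,x\rangle$. That inequality is exactly the right mechanism, and your handling of it (including the trivial case $Cx=0$ implicit in the division) is sound, as is the verification that the pointwise limit is a bounded self-adjoint operator. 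Two remarks on fit with the paper's statement. First, the paper's accompanying remark interprets ``bounded above'' per vector (for each unit $u$, the scalar sequence $\langle A_nu,u\rangle$ is bounded above), whereas you assume a single self-adjoint upper bound $B$; under the paper's weaker reading you would need the uniform boundedness principle to secure the uniform constant $K$ before your Cauchy--Schwarz step, after which the argument is unchanged. Second, your conclusion --- pointwise (strong) convergence $A_nx\to Ax$ for each $x$ --- is the correct content of Vigier's theorem; the paper's recalled ``definition'' of strong convergence, $\lim_n\sup_{\|u\|=1}\|A_nu-Au\|=0$, is actually norm convergence, under which the statement would be false (an increasing sequence of finite-rank orthogonal projections converging to the identity is a counterexample), so your proof establishes the theorem in the sense in which it is true and in which the paper actually uses it.
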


	Here, "increasing and  bounded above " (or "decreasing and  bounded below")  means that for every $u \in\H$ with $||u||=  1$,  the sequence $n\to \l A_n u, u\r$ is increasing and bounded above  (or decreasing and bounded below).  Recall that a sequence  of bounded operators $ A_n:\H\to \H $  strongly converges  to an operator $A:\H\to\H$  if $\dsize \lim_{n\to\infty}\sup_{  ||u||=1}||A_nu -A u||=0$.

	Vigier's theorem  can also be stated in a more general form  (see \cite{Murphy2014}, Theorem 4.1).
%
Theorem \ref{T-v} yields the following 
\begin{Cor}\label{C-vig}
	 Let $\{P_N \}_{N\in\N}$   be a family of orthogonal projections  on $\H$ such that    $P_{N+1}(\H)\supset P_N(\H)$ for every $N\in\N$, and $\dsize\cup_{N=1}^\infty P_N(\H)=\H$. Then,  the projections  $P_N$   strongly converge  to the identity, i.e., 
	 $$\lim_{n\to\infty}\sup_{||u||=1} ||P_nu-u||=0.$$
\end{Cor}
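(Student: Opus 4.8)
The plan is to cast $\{P_N\}$ as an increasing, norm-bounded sequence of self-adjoint operators so that Vigier's theorem (Theorem \ref{T-v}) applies, and then to identify the resulting strong limit as the identity using the density of $\cup_N P_N(\H)$. The heart of the argument is the operator-order monotonicity coming from the nesting hypothesis; the identification of the limit is then essentially bookkeeping.

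First I would record the order structure. Each orthogonal projection is self-adjoint and satisfies $0\le \l P_N u,u\r=\|P_N u\|^2\le \|u\|^2$, so for fixed $u$ the sequence $N\mapsto \l P_N u,u\r$ is bounded above by $\|u\|^2$. To see it is increasing, note that $P_N(\H)\subset P_{N+1}(\H)$ forces $P_{N+1}P_N=P_N$, since the larger projection fixes every vector of $\mathrm{Ran}(P_N)\subset\mathrm{Ran}(P_{N+1})$; taking adjoints gives $P_N P_{N+1}=P_N$ as well. Expanding then yields $(P_{N+1}-P_N)^2=P_{N+1}-P_N$, so $P_{N+1}-P_N$ is a self-adjoint idempotent, hence an orthogonal projection, hence positive. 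Therefore $\l P_N u,u\r\le \l P_{N+1}u,u\r$ for every $u$, and the hypotheses of Theorem \ref{T-v} are met.

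By Vigier's theorem the $P_N$ converge strongly to a bounded self-adjoint operator $A$ with $0\le A\le I$ and $\|A\|\le 1$ (the norm bound passing to the strong limit from $\|P_N\|\le 1$). It then remains to show $A=I$. For $u\in\cup_N P_N(\H)$ there is an $N_0$ with $u\in P_{N_0}(\H)$, and the nesting gives $P_N u=u$ for all $N\ge N_0$, so $Au=\lim_N P_N u=u$. Since $\cup_N P_N(\H)$ is dense and $A,I$ are bounded, I would approximate an arbitrary $u$ by $w\in\cup_N P_N(\H)$ and estimate $\|Au-u\|\le \|A(u-w)\|+\|Aw-w\|+\|w-u\|\le 2\|u-w\|$, which is arbitrarily small; hence $A=I$.

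The only genuinely substantive step is the monotonicity $P_N\le P_{N+1}$, which is where the nesting hypothesis is used. The point I would be most careful about is the mode of convergence: Theorem \ref{T-v} delivers $\|P_N u-u\|\to 0$ for each fixed $u$, and this pointwise statement is exactly what Theorem \ref{V2} requires. The uniform reading $\sup_{\|u\|=1}\|P_N u-u\|\to 0$ as literally written is strictly stronger and holds only when some $P_{N_0}=I$ (for instance when $\dim\H<\infty$): in $\ell^2$ with $P_N$ the projection onto the first $N$ coordinates one has $\|P_N e_{N+1}-e_{N+1}\|=1$ for every $N$. I would therefore phrase and use the pointwise version of the conclusion.
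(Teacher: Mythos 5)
Your argument is correct and follows the same route as the paper's proof: extract $P_{N+1}P_N=P_NP_{N+1}=P_N$ from the nesting hypothesis, deduce that $N\mapsto\langle P_Nu,u\rangle$ is increasing and bounded above, and apply Vigier's theorem (Theorem \ref{T-v}). The differences are minor and both work in your favor. For monotonicity, the paper computes $\langle P_jx,x\rangle=\|P_jP_{j+1}x\|^2\le\|P_{j+1}x\|^2$ using contractivity of $P_j$, while you observe that $P_{N+1}-P_N$ is a self-adjoint idempotent, hence a positive operator; either derivation is fine. For identifying the limit, the paper simply asserts at the outset that $\lim_n P_n(x)=x$ for every $x\in\H$ ``in view of the assumptions,'' which is immediate only if one reads the hypothesis $\cup_{N}P_N(\H)=\H$ literally; under that literal reading the corollary would not even apply to the paper's Example 1, where the union of the ranges of the coordinate projections $\Pi_N$ is merely dense in $\ell^2(\N)$. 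Your density argument ($Aw=w$ on the union, then $\|Au-u\|\le 2\|u-w\|$ by contractivity of $A$) is exactly what is needed to cover the dense case, so your write-up repairs a small gap rather than introducing one.

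Your closing caution also identifies a genuine misstatement in the paper. The displayed conclusion $\lim_{n\to\infty}\sup_{\|u\|=1}\|P_nu-u\|=0$ is convergence in operator norm, not strong convergence (the paper's parenthetical ``recall'' of the definition of strong convergence after Theorem \ref{T-v} is itself incorrect), and it is false whenever no $P_{N_0}$ equals the identity: in the paper's own Example 1, $\|\Pi_Ne_{N+1}-e_{N+1}\|=1$ for every $N$. What Vigier's theorem yields, and what the proof of Theorem \ref{V2} actually uses (at the single vector $v$, namely $\lim_N\|v-P_Nv\|=0$), is the pointwise statement. Your decision to state and prove the pointwise version is the correct reading of the corollary.
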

\begin{proof}
Let $\H_n=P_n(\H)$.  In view of the assumptions on the $\H_n$, we have that    $\dsize \lim_{n\to\infty} P_n(x)=x$ for every $x\in\H$.	Observe that $P_{j+1}P_j= P_j P_{j+1}=P_j$, that   $P_j^2=P_j$, and $\|P_j x\|\leq \|x\|$ for all $x\in\mathcal H$ and every $j\in\N$.  Thus, 
	$$
	\langle P_j x,x\rangle = \langle P_j^2 x,x\rangle=  \|P_jx\|^2=\|P_jP_{j+1}x\|^2 \le \|P_{j+1}x\|^2=\langle P_{j+1}x,x\rangle 
	$$
	which shows that the sequence $\{P_N\}_{N\in\N}$ is increasing and bounded above. By Vigier's theorem,  the $P_N$ converge strongly to the identity, as required.
\end{proof}

 The following Lemma is Theorem 3.1-2 in \cite{Rav}  but we will prove it here for the convenience of the reader.
\begin{lemma}	\label{l:new}
	Let $f\in\H$ and let  $v$ and $v_N$  be   the  solutions  of Problems $\mathcal P$ and   $\mathcal P_N$ defined above.
There exists a constant $C>0$ independent of $n$ such that
	\begin{equation}\label{dC}
	\left\|v-v_N\right\| \leq C \inf_{u  \in \mathcal H_N}\left\|u-v \right\| .
	\end{equation}
\end{lemma}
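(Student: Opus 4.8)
The plan is to run the classical Céa-type argument for Galerkin approximations, with the bilinear form $(u,w)\mapsto\l Su,w\r$ playing the role of the energy inner product. First I would record the two variational identities underlying Problems $\mathcal P$ and $\mathcal P_N$: by Theorem \ref{th:laxframe}(a) the minimizer $v$ satisfies $\l Sv,u\r=\l f,u\r$ for every $u\in\H$, and by Theorem \ref{th:laxframe}(b) the minimizer $v_N$ satisfies $\l Sv_N,u\r=\l f,u\r$ for every $u\in\H_N$. Since $\H_N\subset\H$, I may use the first identity with test vectors taken from $\H_N$ and subtract the second, obtaining the Galerkin orthogonality relation $\l S(v-v_N),u\r=0$ for every $u\in\H_N$.

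Next I would invoke the frame inequality \eqref{eq66}, which says precisely that the form $\l S\cdot,\cdot\r$ is coercive with constant $A$ (since $\l Su,u\r\ge A\|u\|^2$) and bounded with constant $B$ (since $S$ is self-adjoint and positive with $\l Su,u\r\le B\|u\|^2$, whence $\|S\|\le B$ and $|\l Su,w\r|\le B\|u\|\,\|w\|$). Fixing an arbitrary $w\in\H_N$ and noting that $w-v_N\in\H_N$, coercivity gives
$$A\|v-v_N\|^2\le\l S(v-v_N),\,v-v_N\r=\l S(v-v_N),\,v-w\r+\l S(v-v_N),\,w-v_N\r.$$
The last term vanishes by Galerkin orthogonality, so bounding the surviving term by $B\|v-v_N\|\,\|v-w\|$ and dividing by $\|v-v_N\|$ (the estimate being trivial when $v=v_N$) yields $\|v-v_N\|\le\frac BA\|v-w\|$.

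Finally, since $w\in\H_N$ was arbitrary, I would take the infimum over $\H_N$ to obtain \eqref{dC} with the explicit constant $C=B/A$, which depends only on the frame bounds and hence is independent of $N$, as required. I do not expect a genuine obstacle here: coercivity and boundedness are already supplied by \eqref{eq66}, and the only points to handle with care are the degenerate case $v=v_N$ and the correct use of $w-v_N\in\H_N$ in the orthogonality step. It is worth remarking that the argument uses only the boundedness and coercivity of $\l S\cdot,\cdot\r$, not its symmetry, so the same estimate would persist for non-self-adjoint invertible $A$ in the setting of Corollary \ref{lm:lax2}.
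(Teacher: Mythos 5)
Your proof is correct and essentially the same as the paper's: both run the classical C\'ea argument, deriving Galerkin orthogonality $\l S(v-v_N),\,u\r=0$ for $u\in\H_N$ from the two variational identities, splitting $v-v_N=(v-w)+(w-v_N)$ with $w\in\H_N$, and combining coercivity (constant $A$) with boundedness (constant $B$) to reach the same constant $C=B/A$. The only cosmetic difference is that you bound $\left|\l S(v-v_N),\,v-w\r\right|$ directly by boundedness of the bilinear form, while the paper first applies Cauchy--Schwarz and then the operator bound $\|S(v-v_N)\|\le B\|v-v_N\|$.
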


\begin{proof}
Recall that $v$ and the $v_N$ satisfy 
$$\l Sv, u\r=\l f, u\r , \quad \l Sv_N, P_Nu\r =\l f, P_Nu\r  $$ for every $u\in\H$. Let $y\in \H_N$ and let  $w_N=y -v_N$. The element $w_N$ belongs to $\mathcal H_N$ and therefore to  $\mathcal H$.  Thus, 
$ \l Sv, \, w_N\r= \l f, w_N\r$ and $\l Sv_N, \, w_N\r= \l f, w_N\r$, and so 
 		$ \l S(v-v_N), w_N\r=0$; 
we can see at once that
		$$\l S(v-v_N),\ v -v_N\r=    \l S(v-v_N), v -y\r+\l S(v-v_N), w_N\r= \l S(v-v_N),\ v-y\r. $$
In view of  $A||x||^2\leq  \l S(x),\ x\r$ and $\l S(x), S(x)\r= \l S^2 x,\, x \r \leq B^2||x||^2$,  we gather
		$$
		A\left\|v-v_N\right\|^2 \leq  \l S(v-v_N),\ v-y\r  \leq  ||S(v-v_N)|| \,||v-y||\leq B||v-v_N||\,||v-y||
		$$
		from which follows that $\left\|v-v_N\right\|^2\leq \frac{B}{A}||v-y||.$ Since the inequality holds for every $y\in\H_N$, \eqref{dC} follows  with $C=\frac{B}{A}$. 
\end{proof}

 \begin{proof}[Proof of Theorem \ref{V2}]  By Lemma \ref{l:new},
	$$
	\left\|v-v_N\right\| \leq C \inf_{y \in \mathcal H_N}\left\|y-v \right\|\leq C||v-P_N v||
	$$
	and  by Corollary \ref{C-vig}, $\dsize\lim_{n\to\infty}\left\|v-v_N\right\|\leq C\lim_{n\to\infty}||v-P_N v||=0$
\end{proof}

\noindent
{\it Remark}.   
 Our approach to approximate the inverse frame operator is different from the approximation methods presented in \cite[Chapt 23]{Chr}. 

  In  \cite[Section 23.1]{Chr}, the author  considers the increasing sequence of  finite frames $\F_N=\{x_1,\, ...,\,  x_N\}$  in $\H_N= span\{x_1,\, ...,\, x_N\}$ and  approximates    the frame operators of $\F$ with the  frame operators of the $\F_N$. For a given $u\in\H$, the sequence 
  $S_N u=\sum_{j=1}^N \l   u, x_j\r x_j$    converges  to  $Su$,     
  but   the frame coefficients $\l S^{-1}_N u,\, x_k\r $ converge to the  $\l S^{-1}  u,\, x_k\r$ for every $u\in\H$ 
	     if and only if, for every  $n\in\N$ and  $j\leq n$, we have that  $||S^{-1}_N(x_j)|| \leq C_j$, with $C_j$ independent of $N$ (\cite[Theorem 23.1.1]{Chr}  ).
	See also \cite {christensen1993frames}.
 
	Let   $P_N$ denote the orthogonal projection on   $\H_N= Span\{x_1,\, ...,\, x_N\}$. 
 The Casazza-Christensen method  (see \cite[Section 23.2]{Chr},  and  \cite {christensen2000finite}) consists in approximating $S^{-1}$   with operators $(P_nS_{n+m(n)})^{-1} P_n :\H_n\to\H_n$, where   $m(n)>0$ is chosen so that 
the   frame  bounds of  the frames $\{P_n f_k\}_{k=1}^{n+m(n)} $ are   all the same.

 In \cite[Theorem 23.2.3]{Chr}  it is proved that $(P_nS_{n+m(n)})^{-1} P_nu$ converges  to  $S^{-1} u$  in the strong topology of $\H$,   from which  follows   that $	\lim_{n\to\infty}\l   (P_nS_{n+m(n)})^{-1} P_nu,\  x_k\r= \l   S^{-1} u,\ x_k\r $ for every $u\in\H$.
 
The method of approximation presented in our paper   relies on a family of  orthogonal projections that satisfy the assumptions in Corollary \ref{C-vig};  we   do not approximate    $S$ with   frame operators of frames   related to      $\F$ in an obvious way. In th

\section{Examples }

In the previous section we have shown that, for a given family of projections $\{P_N\}_N$ that satisfy the   the assumptions in Corollary \ref{C-vig}, the  inverse of the  frame operator $S:\H\to \H$ can be approximated arbitrarily well  (in the sense of Theorem \ref{V2})   with  the inverse of operators $P_NSP_N : P_N(\H)\to P_N(\H)$. The following example  illustrates how our results can be applied.

\noindent
{\it Example 1.}   
Let $\H=\ell^2(\N) $ and    $\F=\{f_j\}_{j=1}^\infty$, with  $f_1=e_1$ and $f_k= e_{k-1}+\frac 1k e_k$.  Here    $\{e_n\}$ is the canonical  orthonormal basis of $\ell^2(\N)$.  

It is easy to verify that the frame operator of $\F$ is represented by a matrix $M$ with elements $m_{i,j}$, with  $m_{i,j}=0$ if $|i-j|\ge 2$, and 
$m_{j,j}= 1+\frac{1}{(j+1)^2}$, and   $m_{j, j+1}=m_{j+1, j}=\frac{1}{j+1}$. 

Let $\Pi_N:\ell^2(\N)\to \ell^2(\N)$, $\Pi_N(x)=(x_1, ...,\, x_N,\,0, ...) $ be the projection on the first $N$ components of $x$.  We have observed in Section 2.1 that   $\Pi_N S\Pi_N$ is represented on $\Pi_N(\ell^2(\N))$ by the matrix  $M_N$  formed by  the intersection of the first $N$ rows and columns of $M $.  By Theorem \ref{V2},     $ \dsize \lim_{N\to \infty} ||S^{-1}f- M_N^{-1} (\Pi_N f)||=0$ whenever   $f\in \ell^2(\N)$, from which follows that 
 $\dsize 	\lim_{N\to\infty}\l   M_N^{-1} (\Pi_N f),\  x_k\r= \l   S^{-1} f,\ x_k\r$.
 
The  sub-matrices $M_N$ are symmetric tri-diagonal; the inverse  of these matrices are well-studied and explicit formulas are know. See \cite{Meu}.

 Example 23.1.3 in \cite {Chr}  shows  how the    approximation method presented in \cite[Section 23.1]{Chr}  does not work 
 for the frame $\F$.

\medskip

\subsection{Inverting an increasing sequence of matrices}

 The previous example can be generalized to any frame  of $\ell^2(J)$, with frame operator represented by the infinite matrix $M =\{m_{i,j}\}_{i,j\in\N}$. If $S$,   the frame operator of $\F$,  is represented by the matrix $M$, 
 the   operators $\Pi_N S\Pi_N$ are represented on $\H_N=\Pi_N(\ell^2(J))$ by the matrices  $M_N$  formed by  the intersection of the first $N$ rows and columns of $M $.  By Theorem \ref{V2},    $\lim_{N\to \infty} ||S^{-1}f- M_N^{-1} (\Pi_N f)||=0$ whenever   $f\in \ell^2(J)$,   and so the problem of approximating $S^{-1}$  reduces to    the problem of inverting a  sequence of matrices $\{M_N\}_N$ where, for every $N\ge 1$,  $M_N$ is the  principal sub-matrix of order $N$ of $M_{N+1}$. 
 
The results that follow  are not new, but we present  them here for completeness.

\begin{Thm}\label{SM}
	
	Let   $M_{n }$  be a symmetric  invertible $n\times n$ matrix  and let  $M_{n-1 }$ be the sub-matrix of $M_{n } $   obtained after removing the $n $-th row and column of $M_{n }$. If $k=m_{ n,n}-b^T M_{n-1}^{-1}b\ne 0$, then
	$$M^{-1}_{n }= \left(\begin{matrix} M_{n-1}^{-1}  + \frac 1k (M_{n-1}^{-1}b)(M_{n-1}^{-1}b)^T )  & \ -\frac 1k M_{n-1}^{-1} b \\ \\ -\frac 1k (M_{n-1}^{-1} b)^T & \frac 1k\end{matrix}\right)
	$$   
	\end{Thm}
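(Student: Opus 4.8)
The plan is to recognize the asserted formula as the standard Schur-complement block inversion, in which the scalar $k=m_{n,n}-b^T M_{n-1}^{-1}b$ is precisely the Schur complement of the block $M_{n-1}$ in $M_n$, and then to establish it by direct multiplication. Here $b\in\R^{n-1}$ denotes the vector of the first $n-1$ entries of the last column of $M_n$; since $M_n$ is symmetric, its last row is $(b^T,\,m_{n,n})$, so that $M_n=\left(\begin{matrix} M_{n-1} & b \\ b^T & m_{n,n}\end{matrix}\right)$. Note that the appearance of $M_{n-1}^{-1}$ in the statement already presupposes that $M_{n-1}$ is invertible; in the intended application $M_{n-1}$ is a principal submatrix of a positive-definite matrix, hence invertible by the Cauchy interlacing theorem recalled in Section 2.

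First I would abbreviate $w=M_{n-1}^{-1}b\in\R^{n-1}$ and, using that $M_{n-1}$ (and therefore $M_{n-1}^{-1}$) is symmetric, record the identity $b^T M_{n-1}^{-1}=w^T$; this is what lets the rank-one correction $\frac1k ww^T$ be read as $\frac1k (M_{n-1}^{-1}b)(M_{n-1}^{-1}b)^T$ exactly as in the stated formula. Calling the candidate matrix $N=\left(\begin{matrix} M_{n-1}^{-1}+\frac1k ww^T & -\frac1k w \\ -\frac1k w^T & \frac1k\end{matrix}\right)$, the whole verification is then driven by two elementary identities: $M_{n-1}w=M_{n-1}M_{n-1}^{-1}b=b$, and the scalar relation $b^T w=b^T M_{n-1}^{-1}b=m_{n,n}-k$.

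Next I would compute the product $M_n N$ block by block and check that, via these two identities, each block collapses to the corresponding block of the identity: the top-left block reduces to $I+\frac1k(M_{n-1}w-b)w^T=I$, the top-right to $\frac1k(b-M_{n-1}w)=0$, the bottom-left to $w^T+\frac1k(b^T w-m_{n,n})w^T=w^T-w^T=0$ (since $b^T w-m_{n,n}=-k$), and the bottom-right to $\frac1k(m_{n,n}-b^T w)=\frac1k\cdot k=1$. Hence $M_n N=I$, and because $M_n$ is a square invertible matrix a right inverse is its inverse, so $N=M_n^{-1}$, as claimed.

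The argument is essentially computational and I expect no genuine obstacle. The only points needing care are the correct use of symmetry to turn $b^T M_{n-1}^{-1}$ into $w^T$, and the consistent bookkeeping of the scalar identity $b^T M_{n-1}^{-1}b=m_{n,n}-k$ across the two off-diagonal blocks and the bottom-right entry. The hypothesis $k\neq0$ is exactly what makes the formula well-defined, while the invertibility of $M_{n-1}$, implicit in the statement, is what licenses writing $w=M_{n-1}^{-1}b$ in the first place.
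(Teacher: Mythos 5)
Your proof is correct, but it takes a genuinely different route from the paper's. You verify the claimed inverse directly: writing $w=M_{n-1}^{-1}b$, multiplying $M_n$ against the candidate matrix block by block, and collapsing everything with the two identities $M_{n-1}w=b$ and $b^Tw=m_{n,n}-k$. The paper instead \emph{derives} the formula in two steps: it first quotes the block-inversion formula in the form whose top-left block is $\bigl(M_{n-1}-\tfrac{1}{m_{n,n}}bb^T\bigr)^{-1}$, and then applies the Sherman--Morrison formula (stated there as a lemma, with $A=M_{n-1}$ and $u=v=m_{n,n}^{-1/2}b$) to rewrite that block as $M_{n-1}^{-1}\bigl(I+\tfrac1k\,bb^TM_{n-1}^{-1}\bigr)$. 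Your approach buys self-containedness and slightly weaker hypotheses: the paper's route divides by $m_{n,n}$, so it implicitly assumes $m_{n,n}\neq 0$ (harmless in the intended application, since diagonal entries of frame operators are positive, but not among the theorem's stated hypotheses), and it must track a sign carefully when matching $-\tfrac{1}{m_{n,n}}bb^T$ to a rank-one update $uv^T$; your direct multiplication uses only the invertibility of $M_{n-1}$ and $k\neq 0$, exactly as stated. What the paper's derivation buys in exchange is context: it exhibits the formula as a consequence of the Sherman--Morrison lemma it introduces, showing where the expression comes from rather than merely confirming it.
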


To prove the theorem, we  use the Sherman–Morrison formula  (see \cite{SM}, and   also   \cite{M}).  

\begin{Lemma} 
	If the matrices  $A$ and $A+B$ are invertible and $B=uv^T$, then  
	\begin{equation}\label{mil}
		(A+B)^{-1}=A^{-1}\left(I-\frac{ uv^T A^{-1}}{1+ v^TA^{-1} u}\right).
	\end{equation}
\end{Lemma}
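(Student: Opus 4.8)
The plan is to prove the identity by direct verification, exploiting the hypothesis that $A+B$ is invertible: an invertible matrix has a unique two-sided inverse, so it suffices to exhibit a right inverse. Concretely, I would set $G = A^{-1}\left(I - \frac{uv^T A^{-1}}{1 + v^T A^{-1} u}\right)$ (the proposed right-hand side of \eqref{mil}) and show that $(A+uv^T)G = I$; uniqueness of the inverse then forces $G = (A+B)^{-1}$. Before any computation I would record that the scalar $1 + v^T A^{-1} u$ in the denominator is nonzero, so that $G$ is well defined: by the matrix determinant lemma, $\det(A + uv^T) = \det(A)\,(1 + v^T A^{-1} u)$, and since both $A$ and $A + B = A + uv^T$ are invertible, this last factor cannot vanish.

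The key computational step is to expand $(A + uv^T)G$ after rewriting $G = A^{-1} - \frac{1}{\alpha} A^{-1} u v^T A^{-1}$, where I abbreviate $\alpha = 1 + v^T A^{-1} u$ and $\beta = v^T A^{-1} u$, so that $\alpha = 1 + \beta$. The crucial point is that $\beta = v^T A^{-1} u$ is a scalar and can be pulled out of any matrix product. Multiplying out produces four terms; using $A A^{-1} = I$ and $v^T A^{-1} u = \beta$, every term except the leading $I$ becomes a scalar multiple of the rank-one matrix $u v^T A^{-1}$, and their combined coefficient is $1 - \frac{1}{\alpha} - \frac{\beta}{\alpha}$. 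Since $1 - \frac{1}{\alpha} - \frac{\beta}{\alpha} = 1 - \frac{1 + \beta}{\alpha} = 1 - \frac{\alpha}{\alpha} = 0$, all rank-one contributions cancel and $(A + uv^T)G = I$, as required.

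There is essentially no deep obstacle here; the only thing to watch is the bookkeeping that distinguishes scalars from matrices, in particular recognizing that $v^T A^{-1} u$ is a $1 \times 1$ quantity which commutes past the vectors $u$ and $v^T$, whereas $u v^T$ and $A^{-1} u v^T A^{-1}$ are genuine matrices whose order must be preserved. If one prefers, the identical algebra verifies the left-sided relation $G(A + uv^T) = I$ instead, and either one-sided check is enough once $A+B$ is known to be invertible.
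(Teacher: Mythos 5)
Your proof is correct, but note that the paper itself contains no proof of this lemma to compare against: it is the Sherman--Morrison formula, and the authors simply cite the original sources \cite{SM} and \cite{M} before using it in the proof of Theorem \ref{SM}. Your direct verification is the standard self-contained argument, and the algebra checks out: with $\beta = v^T A^{-1}u$ and $\alpha = 1+\beta$, expanding
$(A+uv^T)\left(A^{-1}-\tfrac{1}{\alpha}A^{-1}uv^TA^{-1}\right)$
gives $I$ plus the rank-one matrix $uv^TA^{-1}$ with scalar coefficient $1-\tfrac{1}{\alpha}-\tfrac{\beta}{\alpha}=0$, and a one-sided check suffices because $A+B$ is assumed invertible. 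Your preliminary verification that $\alpha\neq 0$ is a genuinely valuable addition, since the right-hand side of \eqref{mil} is not even well defined without it, and the paper's statement glosses over this; the matrix determinant lemma handles it. If you want to avoid determinants entirely (e.g.\ to keep the argument valid verbatim for operators on a Hilbert space, in the spirit of the rest of the paper), the same point follows from a kernel argument: if $1+v^TA^{-1}u=0$ then $u\neq 0$, and $(A+uv^T)A^{-1}u=u\left(1+v^TA^{-1}u\right)=0$ exhibits the nonzero vector $A^{-1}u$ in the kernel of $A+uv^T$, contradicting its invertibility.
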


\begin{proof}[Proof of Theorem \ref{SM}] We can write $M_{n }=\left(\begin{matrix} M_n & b  \\ b^T& m_{n , n }\end{matrix}\right)$
where $b^T=(m_{n , 1}, ... m_{n ,\,n-1})$.  By the well-known formula for the inverse of a block matrix, we obtain 
$$M^{-1}_{n }= \left(\begin{matrix} (M_{n-1} -\frac{1}{m_{n , n }} b b^T)^{-1}& -\frac 1k M_{n-1}^{-1} b \\ \\ -\frac 1k b^TM_{n-1}^{-1}  & \frac 1k\end{matrix}\right)
$$ where $k=m_{ n,n}-b^T M_{n-1}^{-1}b$.

So, we need to evaluate  the inverse of $M_{n-1} -\frac{1}{m_{n , n }} b b^T $.

If we apply \ref{mil}    with $A=M_n$ and $u=v= m_{n,n} ^{-\frac 12}\,b  $, we obtain 
$$
(M_{n-1} -\frac{1}{m_{n , n }} b b^T )^{-1}=  M_{n-1}^{-1}\left(I + \frac 1k (bb^T M_{n-1}^{-1} ) \right), 
  $$
  and so 
 $$M^{-1}_{n }= \left(\begin{matrix} M_{n-1}^{-1}\left(I + \frac 1k (bb^T M_{n-1}^{-1} ) \right)& -\frac 1k M_{n-1}^{-1} b \\ \\ -\frac 1k b^TM_{n-1}^{-1}  & \frac 1k\end{matrix}\right)
  $$   
 \end{proof}

\subsection{Inverting the frame operator of frames with n elements in $\R^n$ }

Let $S$ be the frame operator of   frame  with $n$ elements in  $\R^n$. 
Let $A$ be an invertible $n\times n$ matrix, and let  $x_j=y_j$. Then 
\begin{equation}
 S(x)=\sum\left\langle x_j, x\right\rangle x_j  =
 \sum\left\langle A y_j, x\right\rangle A y_j  =A \sum\left\langle y_j, A^{\top} x\right\rangle y_j
\end{equation}
Thus, if we let  $S_A$ be the frame operator of the frame $A^{-1} \F$, and $y=A^Tx$, then   
$ S(x)=A S_A  A^T(x)$. If  the vectors $y_j$ are orthonormal, then the frame operator $S_A $ is the identity, and so $S =A A^T$; thus, $$S^{-1}=( A^{-1})^T A^{-1}.$$
We can construct the matrix $A$ of the change of basis from   $\{v_1,....,v_n\}$  to the orthonormal  basis $\{e_1,...,e_n\}$, which is obtained from  $\{v_1,....,v_n\}$ by the Gram-Schmidt process. Using the notation 
$ \langle u|v\rangle :=\frac{\langle u, v \rangle}{\langle u, u \rangle}$, we can construct recursively the columns of the matrix $A$ as follows. 
Since 
$ 
e_1=\frac{1}{\left\|v_1\right\|} \cdot v_1 \quad \Rightarrow \quad  v_1=\left\|v_1\right\| \cdot e_1 $,  and the first column of $A$ is 
 $C_1=\left(\begin{array}{c}
\left\|v_1\right\| \\
0 \\
\vdots \\
0
\end{array}\right).
$ 
   
Let us now assume that we already have the orthonormal vectors $e_1, \ldots, e_k$ and the first $k$ columns of   $A$.   We let 
$ 
w_{k+1}=v_{k-1}-\left\langle e_1 \mid v_{k+1}\right\rangle e_1-\left\langle e_2 \mid v_{k+1}\right\rangle e_2-\ldots-\left\langle e_k \mid v_{k+1}\right\rangle e_k \quad $
and $   e_{k+1}:=\frac{1}{\left\|w_{k+1}\right\|} w_{k+1}
$.
Then,
$$
v_{k+1}=\left\langle e_1 \mid v_{k+1}\right\rangle e_1+\left\langle e_2 \mid v_{k+1}\right\rangle e_2+\ldots+\left\langle e_i \mid v_{kk+1}\right\rangle e_k+\left\|w_{k+1}\right\| e_{k+1}
$$
gives the $k+1$-th column of $A$:
$$
C_{k+1}=\left(\begin{array}{c}
\left\langle e_1 \mid v_{k+1}\right\rangle \\
\vdots \\
\left\langle e_i \mid v_{k+1}\right\rangle \\
\left\|w_{k+1}\right\| \\
0 \\
\vdots \\
0
\end{array}\right)
$$
 The matrix of the operator $S$ has the following form
 	$$S=\left(\begin{array}{cccc}
 		\left\|v_1\right\|  & \left\langle e_1 \mid v_{2}\right\rangle & \cdots & \left\langle e_1 \mid v_{n}\right\rangle\\
 		0 & \left\|w_{2}\right\| & \cdots & \left\langle e_2 \mid v_{n}\right\rangle \\
 		\vdots & \vdots & \ddots & \vdots \\
 		0 & 0 & \cdots & \left\|w_{n}\right\|
 	\end{array}\right)$$
 	It is possible to factorize the matrix $S$ as follows:
 	$$D=\operatorname{Diag}\left(\left\|v_1\right\|, \left\|w_{2}\right\|,\ldots,\left\|w_{n}\right\|\right), \quad S=DQ$$
 	$$Q=\left(\begin{array}{cccc}
 		1  & \frac{\left\langle e_1 \mid v_{2}\right\rangle}{\left\|v_1\right\|} & \cdots & \frac{\left\langle e_1 \mid v_{n}\right\rangle}{\left\|v_1\right\|}\\
 		0 & 1 & \cdots & \frac{\left\langle e_2 \mid v_{n}\right\rangle}{\left\|w_{2}\right\|} \\
 		\vdots & \vdots & \ddots & \vdots \\
 		0 & 0 & \cdots & 1
 	\end{array}\right)=I+N$$
 	where $N$ is nilpotent, i.e. $N^n=O$. Therefore:
 	$$
 	Q^{-1}=(I+N)^{-1}=I+\sum_{k=1}^{n-1}(-1)^k N^k=I-N+N^2-N^3+\cdots+(-1)^{n-1} N^{n-1},
 	$$
 	Finally:
 	$$
 	S^{-1}=Q^{-1} D^{-1}=Q^{-1} \operatorname{Diag}\left(\frac{1}{\left\|v_1\right\|}, \frac{1}{\left\|w_2\right\|}, \ldots, \frac{1}{\left\|w_n\right\|}\right)
 	$$

\end{document}